\newtheorem{thm}{Theorem}
\newtheorem{prop}[thm]{Proposition}
\newtoks\prt
\newtheorem{rem}[thm]{Remark}
\newtheorem{defn}[thm]{Definition}
\newtheorem{assp}[thm]{Assumption}
\newtheorem{example}[thm]{Example}
\newtheorem*{ex*}{Example}
\renewcommand{\epsilon}{\varepsilon}
\newcommand{\dist}{{\mathsf{d}}}
\newcommand{\XX}{{\mathsf{X}}}
\newcommand{\defeq}{\mathrel{\mathop:}=}
\newcommand{\RR}{\mathbb{R}}
\newcommand{\NN}{\mathbb{N}}
\newcommand{\EVI}{\mathrm{EVI}}
\newcommand{\C}{{\mathrm {C}}}
\newcommand{\AC}{{\mathrm {AC}}}
\newcommand{\CAT}{{\mathrm {CAT}}}
\DeclareMathOperator*{\argmin}{arg\,min}
\begin{document}
		\title[Stability of a class of action functionals]{Stability of a class of action functionals \\depending on convex functions}
		
		\author[L. Ambrosio]{Luigi Ambrosio}
		\address{L.~Ambrosio: Scuola Normale Superiore, Piazza dei Cavalieri 7, 56126 Pisa} 
		\email{\tt luigi.ambrosio@sns.it}
		
\author[C. Brena]{Camillo Brena}
\address{C.~Brena: Scuola Normale Superiore, Piazza dei Cavalieri 7, 56126 Pisa} 
\email{\tt camillo.brena@sns.it}

	\begin{abstract}
We study the stability of a class of action functionals induced by gradients of convex functions with respect to Mosco convergence, under mild assumptions on the underlying space.
\end{abstract}
\maketitle
\section*{Introduction}
The aim of this short note is to investigate the stability of action functionals of the form 
$$
\gamma\mapsto \int_0^1 \abs{\dot{\gamma}}^2(t)+\abs{\partial f}^2(\gamma(t))\dd{t},
$$ 
with respect to a suitable notion of convergence of the functions $f$.
Namely, we consider functionals defined on $\C([0,1],\XX)$ as
	\begin{equation}\notag
	\Theta^f_{x_0,x_1}(\gamma)\defeq
	\begin{cases}
		\displaystyle\int_0^1 \abs{\dot{\gamma}}^2(t)+\abs{\partial f}^2(\gamma(t))\dd{t}\quad&\text{if }  \gamma(0)=x_0,\gamma(1)=x_1;\\
		+\infty&\text{otherwise}
	\end{cases}
\end{equation}
and we study the $\Gamma$--convergence of $\Theta^{f^h}_{x^h_0,x^h_1}$ to $\Theta^{f}_{x_0,x_1}$ whenever $x_0^h\rightarrow x_0$, $x_1^h\rightarrow x_1$ and $f^h$ Mosco converges to $f$.

In the recent \cite{ABBGamma}, it has been proved that, in the Hilbertian setting, Mosco convergence of uniformly $\lambda$--convex functions $f^h$ to $f$ together with an uniform bound on the slopes $\abs{\partial f^h}(x_0^h)$ and $\abs{\partial f^h}(x_1^h)$ is enough to guarantee $\Gamma$--convergence of the respective functionals.  In the cited paper, the authors built recovery sequences relying on an interpolation lemma, which is in turn related to Minty’s trick and the structure of monotone operators. To follow such procedure a linear structure of the underlying space seems necessary, so that a generalization of their result to a wider class of spaces requires some work. Also, in the cited paper, the authors suggested that their result could be generalized to non-Hilbertian spaces, and that a natural instance would be the Wasserstein space $\mathcal{P}_2(\XX)$. 

The last observation of the paragraph above motivates then our note. We first generalize the $\Gamma$--convergence result to the case of complete $\CAT(0)$ spaces, in Theorem \ref{main}, and this is the content of Section \ref{sect1}. We then generalize the $\Gamma$--convergence result to the locally compact and geodesic metric space case, in Theorem \ref{main1}, with the uniform $\lambda$--convexity assumption taking the form of Assumption \ref{ass2}, and this is the content of Section \ref{sect2}. We point out that Assumption \ref{ass2} is already extensively used in the monograph \cite{AmbrosioGigliSavare08} as it is a crucial hypothesis to guarantee the existence of $\EVI_\lambda$ gradient flows in metric spaces. 
Moreover, if $\XX$ is an Hilbert space, functionals defined on the Wasserstein space $\mathcal{P}_2(\XX)$ that are $\lambda$--convex along generalized geodesics (\cite[Definition 9.2.4]{AmbrosioGigliSavare08}) satisfy Assumption \ref{ass2}, so we see that our result gives a positive answer to the problem suggested in \cite{ABBGamma}, at least for Wasserstein spaces built on compact Hilbert spaces. 
We see also with Theorem \ref{main2} that on any geodesic metric space, if $f^h=\epsilon_h f$, with $\epsilon_h\searrow 0$, where $f$ is regular enough and is continuous along the sequences $\{x_0^h\}_h$ and $\{x_1^h\}_h$, then the equiboundedness assumption on the slopes is not even needed to gain the $\Gamma$--convergence result, obtaining a different proof of a result contained in \cite{monsaingeon2020dynamical}.

The proofs of our main results, Theorem \ref{main} and Theorem \ref{main1} are similar in structure but use different tools, and are inspired by the techniques employed in \cite{ABBGamma} and \cite{monsaingeon2020dynamical} respectively. Now we briefly sketch the proofs of our main results, both to give an overview of the similarities of the procedure we use to what is already present in the literature and to highlight where are contained the main new ideas of this note. To keep this introduction short, we assume that the endpoints $x_0$ and $x_1$ are kept fixed. Of course, the only problem lies in building recovery sequences for a curve $\gamma$.

In Section \ref{sect1}, we start following \cite{ABBGamma} transforming $\gamma$ through the resolvent operator with respect to $f^h$, $J^{f^h}_{\tau_h}$, considering the curve $t\mapsto  J^{f^h}_{\tau_h}\gamma(t)$, for a suitable sequence $\tau_h\searrow 0$. Then we have to correct $\gamma^h$ at the endpoints. In \cite{ABBGamma} this is done using the interpolation lemma we have already mentioned above, here we use instead a variational interpolation applicable thanks to the crucial estimate contained in Proposition \ref{May98}.

For what concerns Section \ref{sect2}, we argue similarly as before, but replacing the resolvent operators with their \say{continuous version} given by the gradient flow trajectories $G^{f^h}_{\,\cdot\,}$. Also in \cite{monsaingeon2020dynamical} gradient flows were employed in a similar way, with the difference that they modified the curve in one step as $t\mapsto G^{f^h}_{g(t)}\gamma(t)$ for some continuous $g:[0,1]\rightarrow[0,1]$, whereas we first transform the whole curve into $t\mapsto G^{f^h}_{\tau_h}\gamma(t)$, for a suitable sequence $\tau_h\searrow 0$, and then we use the gradient flow again to recover the endpoints condition. The latter approach seems more flexible and allows us to treat other cases than the simpler one in which $f^h=\epsilon_h f$ for a sequence $\epsilon_h\searrow 0$.

\section*{Acknowledgements}
Work supported by the PRIN 2017 project ``Gradient flows, Optimal Transport and Metric Measure Structures".
The authors wish to thank Nicola Gigli for his valuable suggestions.
\section{Preliminaries}
Fix a metric space $(\XX,\dist)$ and $f:\XX\rightarrow\RR\cup\{+\infty\}$. We define the effective domain of $f$ as
\begin{equation}\notag
	D(f)\defeq\left\{x\in\XX: f(x)<+\infty\right\}.
\end{equation} We define also the descending slope,
\begin{equation}\notag
	\abs{\partial f}(x)\defeq
	\begin{cases}
	+\infty&\text{if $x\in \XX\setminus D(f)$}
	\\0&\text{if $x\in D(f)$ is isolated}\\
	\limsup_{y\rightarrow x}\frac{(f(y)-f(x))^-}{\dist(y,x)}\quad&\text{otherwise}.
	\end{cases}
\end{equation}
We will often require one of the following assumptions to hold.

\begin{assp}\cite[Assumption 2.4.5]{AmbrosioGigliSavare08}\label{ass1}
Let $(\XX,\dist)$ be a metric space and $f:\XX\rightarrow\RR\cup\{+\infty\}$. For $\lambda\in\RR$ assume that for any $x_0,x_1\in D(f)$ there exists a curve $\gamma:[0,1]\rightarrow\XX$ joining $x_0$ to $x_1$ such that the map 
$$x\mapsto f(x)+\frac{\dist(x,x_0)^2}{2 \tau}$$
is $(\lambda+\tau^{-1})$ convex along the curve $\gamma$ for every $\tau\in\left(0,\frac{1}{\lambda^-}\right)$.
\end{assp}
\begin{assp}\cite[Assumption 4.0.1]{AmbrosioGigliSavare08}\label{ass2}
	Let $(\XX,\dist)$ be a metric space and $f:\XX\rightarrow\RR\cup\{+\infty\}$. For $\lambda\in\RR$ assume that for any $y,x_0,x_1\in D(f)$ there exists a curve $\gamma:[0,1]\rightarrow\XX$ joining $x_0$ to $x_1$ such that the map 
	$$x\mapsto f(x)+\frac{\dist(x,y)^2}{2 \tau}$$
	is $(\lambda+\tau^{-1})$ convex along the curve $\gamma$ for every $\tau\in\left(0,\frac{1}{\lambda^-}\right)$.
\end{assp}
It is clear that Assumption \ref{ass2} is stronger than Assumption \ref{ass1}.
\begin{prop}\label{slopes}\cite[Theorem 2.4.9]{AmbrosioGigliSavare08}
Let $(\XX,\dist)$ be a metric space and let $f:\XX\rightarrow\RR\cup\{+\infty\}$ satisfy Assumption \ref{ass1} for some $\lambda\in\RR$. Then, for every $x\in D(f)$, \begin{equation}\notag
	\abs{\partial f}(x)=\sup_{y\ne x}\frac{\left(f(y)-f(x)-\frac{\lambda}{2}\dist(y,x)^2\right)^-}{\dist(y,x)}.
\end{equation}
\end{prop}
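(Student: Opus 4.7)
The inequality $\abs{\partial f}(x)\leq \sup_{y\neq x}\frac{(f(y)-f(x)-\frac{\lambda}{2}\dist(y,x)^2)^{-}}{\dist(y,x)}$ does not use Assumption \ref{ass1}. From the elementary bound $a^{-}\leq(a-b)^{-}+b^{-}$ with $a=f(y)-f(x)$ and $b=\frac{\lambda}{2}\dist(y,x)^2$, dividing by $\dist(y,x)$ and taking $\limsup$ as $y\to x$, the residual $\frac{\lambda^{-}}{2}\dist(y,x)$ vanishes, yielding the claim.

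For the converse, fix $y\in D(f)\setminus\{x\}$ and set $C:=f(x)-f(y)+\frac{\lambda}{2}\dist(x,y)^2$; I may assume $C>0$, otherwise the contribution of $y$ to the supremum is zero. By Assumption \ref{ass1} there is a curve $\gamma:[0,1]\to\XX$ joining $x$ to $y$ along which $t\mapsto f(\gamma(t))+\frac{\dist(\gamma(t),x)^2}{2\tau}$ is $(\lambda+\tau^{-1})$-convex for every admissible $\tau$, the crucial point being that the \emph{same} $\gamma$ works for all $\tau$. Rearranging the convexity inequality and collecting the $\frac{1}{2\tau}$-terms on one side gives, for every $t\in[0,1]$ and every admissible $\tau$,
$$\frac{\dist(\gamma(t),x)^2-t^2\dist(x,y)^2}{2\tau}\leq K(t):=(1-t)f(x)+tf(y)-f(\gamma(t))-\frac{\lambda}{2}t(1-t)\dist(x,y)^2,$$
with $K(t)$ finite and independent of $\tau$.

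The plan is now to exploit this inequality at two opposite regimes of $\tau$. Letting $\tau\to 0^{+}$ forces the numerator on the left to be non-positive, hence
$$\dist(\gamma(t),x)\leq t\,\dist(x,y)\quad\text{for every }t\in[0,1],$$
which in particular gives $\gamma(t)\to x$ as $t\to 0^{+}$. Rearranging the same convexity in the other direction,
$$f(x)-f(\gamma(t))\geq t(f(x)-f(y))+\frac{\lambda}{2}t(1-t)\dist(x,y)^2+\frac{\dist(\gamma(t),x)^2-t^2\dist(x,y)^2}{2\tau},$$
and, by the distance bound just derived, the last correction is non-positive with absolute value at most $\frac{t^2\dist(x,y)^2}{2\tau}$. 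Choosing $\tau$ as large as permitted ($\tau\to\infty$ if $\lambda\geq 0$, $\tau\to(1/\abs{\lambda})^{-}$ if $\lambda<0$) renders this correction negligible, producing $f(x)-f(\gamma(t))\geq tC+o(t)$ as $t\to 0^{+}$.

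For $t$ small the right-hand side is strictly positive, so $\gamma(t)\neq x$ and $(f(\gamma(t))-f(x))^{-}=f(x)-f(\gamma(t))$. Dividing by $\dist(\gamma(t),x)\leq t\,\dist(x,y)$ and taking $\limsup$ as $t\to 0^{+}$ yields
$$\abs{\partial f}(x)\geq \limsup_{t\to 0^{+}}\frac{(f(\gamma(t))-f(x))^{-}}{\dist(\gamma(t),x)}\geq\frac{C}{\dist(x,y)},$$
and the supremum over $y$ closes the argument. The delicate step is precisely the coupled use of $\tau\to 0^{+}$ (which controls the geometry of $\gamma$ near $x$) and of $\tau$ maximal (which preserves the sharp decrease of $f$ along $\gamma$) on the \emph{single} curve provided by Assumption \ref{ass1}: this is the reason the assumption requires $(\lambda+\tau^{-1})$-convexity along the same curve uniformly in $\tau$.
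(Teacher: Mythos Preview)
The paper does not supply its own proof of this proposition: it is merely quoted from \cite[Theorem 2.4.9]{AmbrosioGigliSavare08}. Your argument is correct and is essentially the one given in that reference. The two-regime use of $\tau$ you describe --- sending $\tau\searrow 0$ on the single curve furnished by Assumption~\ref{ass1} to force $\dist(\gamma(t),x)\le t\,\dist(x,y)$, and then taking $\tau$ maximal to make the residual $\frac{\dist(\gamma(t),x)^2-t^2\dist(x,y)^2}{2\tau}$ of order $o(t)$ --- is exactly the mechanism behind the cited theorem (and the same device is visible in the paper's proof of Proposition~\ref{boundefrombelow}). The minor points you handle (restricting to $y\in D(f)$ since $y\notin D(f)$ contributes zero to the supremum, and noting $\gamma(t)\neq x$ for small $t$ because $f(\gamma(t))<f(x)$) are the right justifications. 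Nothing is missing.
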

\begin{prop}\label{boundefrombelow}
Let $(\XX,\dist)$ be a metric space and let $f:\XX\rightarrow\RR\cup\{+\infty\}$ satisfy Assumption \ref{ass1} for some $\lambda\in\RR$. Assume moreover that there exists $\bar{x}\in D(f)$ such that
 \begin{equation}\notag
m\defeq\inf_{x\in \bar{B}_1(\bar{x})} f>-\infty.
\end{equation}
Then 
\begin{equation}\label{boundbelowc}
f(y)\ge \frac{\lambda}{2}\dist(y,\bar{x})^2+ \left(m-f(\bar{x})-\frac{\lambda^+}{2}\right)\dist(y,\bar{x}) + m.
\end{equation}
\end{prop}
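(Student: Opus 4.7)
I would split on the distance $D \defeq \dist(y, \bar x)$; the bound is vacuous when $y \notin D(f)$, so assume $y \in D(f)$. For $D \le 1$ one has $y \in \bar B_1(\bar x)$ and hence $f(y) \ge m$, and what must be checked is that the right-hand side of \eqref{boundbelowc} does not exceed $m$, i.e., $\tfrac{\lambda}{2}D^2 + (m - f(\bar x) - \tfrac{\lambda^+}{2})D \le 0$. This follows from $m \le f(\bar x)$ (which bounds $(m - f(\bar x))D \le 0$) combined with $\tfrac{\lambda}{2}D^2 \le \tfrac{\lambda^+}{2}D^2 \le \tfrac{\lambda^+}{2}D$ (using $D \le 1$ and $\lambda^+ \ge 0$), so that $\tfrac{\lambda}{2}D^2 - \tfrac{\lambda^+}{2}D \le 0$. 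It is precisely to make this case work for $\lambda<0$ that $\lambda^+$, rather than $\lambda$, appears in the statement.

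For $D > 1$ I would invoke Assumption \ref{ass1} with endpoints $\bar x, y$ to obtain a curve $\gamma:[0,1] \to \XX$ along which, for every $\tau \in (0, 1/\lambda^-)$ and every $t \in [0,1]$,
\begin{equation*}
f(\gamma(t)) + \frac{\dist(\gamma(t), \bar x)^2}{2\tau} \le (1-t)f(\bar x) + t\Bigl(f(y) + \frac{D^2}{2\tau}\Bigr) - \frac{\lambda + \tau^{-1}}{2} t(1-t) D^2.
\end{equation*}
The key structural fact is that a single curve $\gamma$ serves all admissible $\tau$ simultaneously. Rearranging the above into
\begin{equation*}
f(\gamma(t)) \le (1-t)f(\bar x) + tf(y) - \frac{\lambda}{2}t(1-t)D^2 + \frac{t^2 D^2 - \dist(\gamma(t), \bar x)^2}{2\tau}
\end{equation*}
and letting $\tau \to 0^+$ forces the speed estimate $\dist(\gamma(t), \bar x) \le tD$ for every $t \in [0,1]$: were the bracket negative for some $t$, the right-hand side would tend to $-\infty$, contradicting $f(\gamma(t)) > -\infty$ (which holds since $f$ does not take the value $-\infty$). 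In particular $\gamma(1/D) \in \bar B_1(\bar x)$, so $f(\gamma(1/D)) \ge m$.

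Applying the convexity inequality at $t = 1/D$, dropping the nonnegative term $\dist(\gamma(1/D), \bar x)^2/(2\tau)$ on the left, substituting $f(\gamma(1/D)) \ge m$, and solving for $f(y)$, I get
\begin{equation*}
f(y) \ge Dm - (D-1)f(\bar x) - \frac{D}{2\tau} + \frac{\lambda D(D-1)}{2}, \qquad \tau \in (0, 1/\lambda^-).
\end{equation*}
Taking the supremum over admissible $\tau$, the only $\tau$-dependent term $-D/(2\tau)$ is maximized in the limit $\tau \to (1/\lambda^-)^-$, contributing $-D\lambda^-/2$; using the identity $\lambda + \lambda^- = \lambda^+$ to consolidate the coefficient of $D$ gives
\begin{equation*}
f(y) \ge \tfrac{\lambda}{2}D^2 + \bigl(m - f(\bar x) - \tfrac{\lambda^+}{2}\bigr)D + f(\bar x),
\end{equation*}
which implies \eqref{boundbelowc} since $f(\bar x) \ge m$. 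The main obstacle is exactly the speed estimate $\dist(\gamma(t), \bar x) \le tD$: Assumption \ref{ass1} provides no a priori control on $\gamma$ (in particular $\gamma$ need not be a geodesic), and the Lipschitz-type bound is squeezed out only by letting $\tau \to 0^+$ in a family of inequalities satisfied by one and the same curve.
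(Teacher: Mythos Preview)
Your argument is correct and follows essentially the same route as the paper's proof: split on $\dist(y,\bar x)\le 1$ versus $>1$; in the latter case use the single curve $\gamma$ from Assumption~\ref{ass1}, extract the speed bound $\dist(\gamma(t),\bar x)\le t\,\dist(y,\bar x)$ by sending $\tau\searrow 0$, evaluate at $t=\dist(y,\bar x)^{-1}$, drop the nonnegative $\dist^2/(2\tau)$ term, let $\tau\nearrow 1/\lambda^-$, and conclude via $f(\bar x)\ge m$. Your treatment of the case $\dist(y,\bar x)\le 1$ is more explicit than the paper's (which simply says the claim follows by direct computation), but the substance is identical.
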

\begin{proof}
If $\dist(y,\bar{x})\le 1$, the claim follows by a direct computation. If instead $\dist(y,\bar{x})>1$, we can conclude following similar computations to the ones used in \cite{AmbrosioGigliSavare08}.
Indeed, if $0<\tau<\frac{1}{\lambda^-}$ and $y$ is such that $\dist(y,\bar{x})>1$, we can take a curve (independent of $\tau$) $\gamma:[0,1]\rightarrow\XX$ joining $\bar{x}$ to y such that
\begin{equation}\notag
	\begin{split}
f(\gamma(t))+\frac{\dist(\gamma(t),\bar{x})^2}{2\tau}&\le (1-t)f(\bar{x})+t f(y)+t \frac{\dist(y,\bar{x})^2}{2\tau}-\frac{1}{2}\left(\frac{1}{\tau}+\lambda\right)t(1-t)\dist(y,\bar{x})^2
\\&= (1-t)f(\bar{x})+t f(y)-\frac{\lambda}{2}t(1-t)\dist(y,\bar{x})^2+\frac{1}{2\tau}t^2\dist(y,\bar{x})^2.
	\end{split}
\end{equation}
Notice that, as $\gamma$ is independent of $\tau$, we can multiply the inequality above by $\tau$ and let $\tau\searrow 0$ to obtain (here we assume $f(y)<+\infty$, otherwise there is nothing to show)
$$
\dist(\gamma(t),\bar{x})\le t \dist(y,\bar{x}).
$$
Therefore, if $\bar{t}\defeq\dist(y,\bar{x})^{-1}$, we have that $f(\gamma(t))\ge m$, so that if we neglect $\frac{\dist(\gamma(t),\bar{x})^2}{2\tau}$ and let $\tau\nearrow\frac{1}{\lambda^-}$,
\begin{equation}\notag
	\begin{split}
	m&\le (1-\bar{t})f(\bar{x})+\bar{t} f(y)-\frac{\lambda}{2}(1-\bar{t})\dist(y,\bar{x})+\frac{\lambda^-}{2}\\&=(1-\bar{t})f(\bar{x})+\bar{t} f(y)-\frac{\lambda}{2}\dist(y,\bar{x})+\frac{\lambda^+}{2}
	\end{split}
\end{equation}
so that the claim follows rearranging and noticing that 
$ f(\bar{x})\ge m$.
\end{proof}

Here we introduce our main object of investigation.
\begin{defn}\label{thetafunct}
	Let $(\XX,\dist)$ be a metric space, $x_0,x_1\in\XX$ and $f:\XX\rightarrow\RR\cup\{+\infty\}$. We define
	\begin{equation}\notag
\Theta^f_{x_0,x_1}:\C([0,1],\XX)\rightarrow\RR
	\end{equation} 
	\begin{equation}\notag
		\Theta^f_{x_0,x_1}(\gamma)\defeq
		\begin{cases}
			\displaystyle\int_0^1 \abs{\dot{\gamma}}^2(t)+\abs{\partial f}^2(\gamma(t))\dd{t}\quad&\text{if }\gamma\in\AC([0,1],\XX), \gamma(0)=x_0,\gamma(1)=x_1;\\
			+\infty&\text{otherwise}.
		\end{cases}
	\end{equation}
\end{defn}

If $(\XX,\dist)$ is a metric space and $f:\XX\rightarrow\RR\cup\{+\infty\}$, given $\tau\in(0,+\infty)$, we define the resolvent map
\begin{equation}\notag
	J^f_\tau x\defeq\argmin \left\{f(\,\cdot\,)+\frac{\dist(\,\cdot\,,x)^2}{2\tau}\right\}.
\end{equation}
Clearly, $J^f_\tau x$ can be empty as well as it can contain more than one element.

For what concerns the following result, the first inequality is \cite[Lemma 3.1.3]{AmbrosioGigliSavare08}, whereas the second can be showed following the proof of \cite[Theorem 2.1]{ABBGamma}.
\begin{prop}[{}]
Let $(\XX,\dist)$ be a metric space and $f:\XX\rightarrow\RR\cup\{+\infty\}$ be satisfying Assumption \ref{ass1} for some $\lambda\in\RR$.  
If $\tau\in\left(0,\frac{1}{\lambda^-}\right)$. Then, for every $u\in J_\tau^f x$,
\begin{equation}\label{bounds}
	\abs{\partial f}(u)\le \frac{\dist(u,x)}{\tau}\le\frac{1}{1+\lambda\tau}\abs{\partial f}(x).
\end{equation}
\end{prop}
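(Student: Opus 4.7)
The plan is to handle the two inequalities separately, both starting from the defining minimization property of $u$.

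For the first inequality, I would exploit the fact that $u$ minimizes $\phi(\cdot) \defeq f(\cdot)+\dist(\cdot,x)^2/(2\tau)$. This gives, for every $y\in\XX$,
$$
f(u)-f(y)\le \frac{\dist(y,x)^2-\dist(u,x)^2}{2\tau}.
$$
Using the triangle inequality in the form $\dist(y,x)^2\le \dist(y,u)^2+2\dist(y,u)\dist(u,x)+\dist(u,x)^2$, this becomes
$$
(f(y)-f(u))^-\le \frac{\dist(y,u)^2+2\dist(y,u)\dist(u,x)}{2\tau}.
$$
Dividing by $\dist(y,u)$ and taking the $\limsup$ as $y\to u$ yields $\abs{\partial f}(u)\le \dist(u,x)/\tau$, which is exactly the first inequality. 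This step only requires the minimization characterization of $u$ and does not use Assumption \ref{ass1}.

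For the second inequality, I would invoke Assumption \ref{ass1} with base points $x_0=x$ and $x_1=u$, obtaining a curve $\gamma:[0,1]\to\XX$ joining $x$ to $u$ along which $\phi$ is $(\lambda+\tau^{-1})$-convex. Since $u$ minimizes $\phi$, we have $\phi(u)\le \phi(\gamma(t))$, and combining with the convexity inequality
$$
\phi(\gamma(t))\le (1-t)\phi(x)+t\phi(u)-\frac{\lambda+\tau^{-1}}{2}t(1-t)\dist(x,u)^2,
$$
after subtracting $t\phi(u)$, dividing by $1-t$ and letting $t\uparrow 1$, the quadratic term in $\phi(u)$ contributes an extra $\dist(u,x)^2/(2\tau)$, and one obtains
$$
f(u)-f(x)-\frac{\lambda}{2}\dist(u,x)^2\le -\frac{1+\lambda\tau}{\tau}\dist(u,x)^2.
$$
Since the right-hand side is negative (because $\tau<1/\lambda^-$ forces $1+\lambda\tau>0$), the left-hand side equals $-(\cdot)^-$, so applying the characterization of the slope given in Proposition \ref{slopes} yields
$$
\abs{\partial f}(x)\ge \frac{(1+\lambda\tau)\dist(u,x)^2/\tau}{\dist(u,x)}=\frac{1+\lambda\tau}{\tau}\dist(u,x),
$$
which rearranges to the second inequality.

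The only delicate point is keeping track of the quadratic correction in $\phi$ when passing to the convexity inequality, but this is a straightforward algebraic step; I do not anticipate any genuine obstacle, since all the heavy lifting is already encoded in Assumption \ref{ass1} and the slope formula of Proposition \ref{slopes}.
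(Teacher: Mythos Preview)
Your argument is correct and coincides with the approach the paper cites: the first inequality is exactly \cite[Lemma 3.1.3]{AmbrosioGigliSavare08}, proved via the minimality of $u$ and the triangle inequality as you do, and the second is the computation in \cite[Theorem 2.1]{ABBGamma}, combining the $(\lambda+\tau^{-1})$--convexity of $\phi$ along the curve given by Assumption~\ref{ass1} with the slope formula of Proposition~\ref{slopes}. The only minor cases you should make explicit are $u=x$ (trivial) and $\abs{\partial f}(x)=+\infty$ (trivial, and needed to ensure $x\in D(f)$ before invoking Assumption~\ref{ass1}).
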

\section{CAT$(0)$ setting}\label{sect1}
A $\CAT(0)$ space is a  geodesic metric space $(\XX,\dist)$ such that, for every triple of points $y,x_0,x_1\in\XX$ and constant speed geodesic $x:[0,1]\rightarrow\XX$ joining $x_0$ to $x_1$,  it holds 
$$
\dist(y,x_t)^2\le  (1 - t)\dist^2(y, x_0) + t\dist^2(y, x_1) - t(1 - t)\dist^2(x_0, x_1)
\quad\text{for every }t\in[0,1].$$
It easily follows that in $\CAT(0)$ spaces geodesics are unique, up to reparametrization: if $x,y:[0,1]\rightarrow\XX$ are constant speed geodesics, then
\begin{equation}
	\begin{split}
		\dist(y_t,x_t)^2&\le  (1 - t)\dist^2(y_t, x_0) + t\dist^2(y_t, x_1) - t(1 - t)\dist^2(x_0, x_1)\\&=\left((1-t) t^2+ t(1-t)^2- t(1-t)\right)\dist(y_0,y_1)^2=0.
	\end{split}
\end{equation}

A complete $\CAT(0)$ space is usually called an Hadamard space.
From now on $(\XX,\dist)$ will denote an Hadamard space. General references for this topic are \cite{Bac14,BH99}. It may be useful to know that the completion of a $\CAT(0)$ is still a $\CAT(0)$ space.

We can define a notion of weak convergence on Hadamard spaces (\cite[P.\ 58]{Bac14}) as follows.
First, if $\{x^h\}_h\subseteq\XX$ is any bounded sequence, one may prove that the map
$$ 
y\mapsto \limsup_h \dist(y,x^h)^2
$$
has a unique minimizer, that will be called the asymptotic centre of the sequence $\{x^h\}_h$.
We say then that $x^h$ weakly converges to $x$, and we write $x^h\rightharpoonup x$, if $x$ is the asymptotic centre of each subsequence $\{x^{h_k}\}_k$.
We remark that it is not known if the weak convergence is induced by a topology.
We state for future reference the following properties of weak convergence in Hadamard spaces.
\begin{prop}\label{propweaktopo}
Let $(\XX,\dist)$ be an Hadamard space. Then
\begin{enumerate}[label=\roman*)]
\item if $\{x^h\}_h\subseteq\XX$ is a bounded sequence, then it has a weakly convergent subsequence;
\item if $x^h\rightharpoonup x$ and $\dist(x^h,y)\rightarrow\dist(x,w)$ for some $w\in\XX$, then $x^h\rightarrow x$;
\item for every $w\in\XX$, $\dist(w,\,\cdot\,)$ is lower semicontinuous with respect to the weak convergence.
\end{enumerate}
\end{prop}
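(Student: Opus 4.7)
The plan is to derive all three items from a single \emph{asymptotic Pythagoras} inequality
$$\limsup_h \dist(x,x^h)^2 + \dist(x,w)^2 \le \limsup_h \dist(w,x^h)^2,$$
valid for any bounded sequence $\{x^h\}_h$ with asymptotic centre $x$ and for every $w\in\XX$. To prove it, I would take the constant-speed geodesic $\gamma:[0,1]\to\XX$ from $x$ to $w$ and apply the $\CAT(0)$ inequality with reference point $x^h$, obtaining
$$\dist(\gamma(t),x^h)^2\le (1-t)\dist(x,x^h)^2+t\dist(w,x^h)^2 - t(1-t)\dist(x,w)^2.$$
Passing to $\limsup_h$ on both sides, invoking the defining property $\limsup_h\dist(x,x^h)^2\le \limsup_h\dist(\gamma(t),x^h)^2$ of the asymptotic centre, dividing by $t>0$ and letting $t\searrow 0$ yields the claimed inequality.

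From this inequality, items (ii) and (iii) are essentially one-liners. For (iii), given $x^h\rightharpoonup x$ I would pass to a subsequence along which $\dist(w,x^h)\to\liminf_h\dist(w,x^h)$; this subsequence still has asymptotic centre $x$ by the definition of weak convergence, so the inequality above, after dropping the nonnegative first summand, delivers $\dist(x,w)\le\liminf_h\dist(w,x^h)$. For (ii), inserting the hypothesis $\dist(x^h,w)\to\dist(x,w)$ into the same inequality forces $\limsup_h\dist(x,x^h)^2=0$, hence $x^h\to x$ strongly.

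For (i), I would run the classical iterated minimising subsequence extraction. Setting $r(\{y^h\}_h)\defeq\inf_y \limsup_h\dist(y,y^h)^2$ (the value attained at the asymptotic centre) and $r_\ast\defeq\inf r(\{x^{h_k}\}_k)$, where the infimum is taken over all subsequences, I would select a subsequence achieving $r_\ast$ (by a diagonal argument if the infimum is not attained directly). Refining such a subsequence can only decrease $r$, so every sub-subsequence still has value $r_\ast$, and by uniqueness of the asymptotic centre they all share the same centre; this is precisely the definition of weak convergence.

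The main technical obstacle I anticipate is the preliminary point, namely the existence and uniqueness of the asymptotic centre itself, on which the whole argument rests. Uniqueness comes from strong convexity of $y\mapsto \limsup_h \dist(y,x^h)^2$ along geodesics, which is a direct consequence of the $\CAT(0)$ inequality; existence requires one to show that minimising sequences for this functional are Cauchy (once again via the $\CAT(0)$ inequality applied to midpoints of two near-minimisers) and then to invoke completeness of the Hadamard space.
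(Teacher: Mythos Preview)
Your proposal is correct and is in fact more complete than the paper's own treatment. The paper does not prove items $i)$ and $ii)$ at all, simply citing \cite{Bac14}, and gives a self-contained argument only for $iii)$. That argument is by contradiction: assuming $\liminf_h\dist(w,x^h)<\dist(w,x)$, the authors pick the point $\gamma(s)$ on the geodesic from $w$ to $x$ at distance~$1$ from $w$ (after a rescaling) and show via the $\CAT(0)$ inequality that $\limsup_h\dist(\gamma(s),x^h)^2\le s\,\limsup_h\dist(x,x^h)^2$ with $s<1$, contradicting the definition of asymptotic centre. Your route is more direct and more informative: the asymptotic Pythagoras (Opial-type) inequality
\[
\limsup_h\dist(x,x^h)^2+\dist(x,w)^2\le\limsup_h\dist(w,x^h)^2
\]
is exactly what one finds in \cite{Bac14} and yields $ii)$ and $iii)$ in one stroke, while also feeding into your argument for $i)$. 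The trade-off is that the paper's contradiction proof of $iii)$ is marginally shorter, but your inequality is reusable and makes the Kadec--Klee property $ii)$ transparent.

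One point in your sketch of $i)$ deserves a word of care. After extracting a subsequence $S$ with $r(S)=r_\ast$, you assert that every further subsequence $S'\subseteq S$ shares the \emph{same} asymptotic centre ``by uniqueness of the asymptotic centre''. Uniqueness alone says each sequence has a single centre, not that nested sequences with equal $r$-value have coinciding centres. The missing line is again your Pythagoras inequality: if $c,c'$ are the centres of $S,S'$ respectively, then
\[
r_\ast+\dist(c,c')^2=\limsup_{S'}\dist(c',\cdot)^2+\dist(c,c')^2\le\limsup_{S'}\dist(c,\cdot)^2\le\limsup_{S}\dist(c,\cdot)^2=r_\ast,
\]
forcing $c'=c$. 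With this adjustment your plan goes through.
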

\begin{proof}
Items $i)$, $ii)$ and $iii)$ are taken respectively from \cite[Proposition 3.1.2, Proposition 3.1.6 and Corollary 3.2.4]{Bac14}. 

We give a shorter and self contained proof of item $iii)$ for the reader's convenience.
	We argue by contradiction assuming that for some $w\in\XX$ it holds that $$\liminf_h \dist(x^h,w)<\dist(x,w).$$ Possibly extracting subsequences and rescaling the metric, we can assume that  also $\dist(x,w)>1$ but $ \dist(x^h,w)\le 1$ for every $h$. Let $\gamma:[0,1]\rightarrow\XX$ be the (unique) constant speed geodesic joining $w$ to $x$ and let also $s\in[0,1]$ be the (unique) point for which $\dist(w,\gamma(s))=1$, i.e.\  $s=\dist(w,x)^{-1}$. 
	
	By the $\CAT(0)$ hypothesis, it holds that, for every $h$,
	\begin{equation}\notag
		\begin{split}
			\dist(x^h,\gamma(s))^2&\le (1-s)\dist(x^h,w)^2+s\dist(x^h,x)^2-s(1-s)\dist(w,x)^2\\&\le 1-s+s\dist(x^h,x)^2-(1-s)\dist(w,x)\le s\dist(x^h,x)^2
		\end{split}
	\end{equation}  
	that is a contradiction to the fact $x^h\rightharpoonup x$ as $s<1$ and $\gamma(s)\ne x$.
\end{proof}
We can see that this notion of weak topology coincides with the usual weak topology in Hilbert spaces. In order to do so, recall that if $x^h$ weakly converges to $x$ with respect to the weak topology induced by the Hilbert structure, then
$$
\dist(y,x)^2+\limsup_h  \dist( x,x^h)^2\le\limsup_h \dist( y,x^h)^2
$$
for any $y\in\XX$.
Then, to conclude, just recall that bounded sets are relatively compact with respect to both notions of weak convergence and that the two notions of weak convergence are stable with respect to the extraction of subsequences.

If $f:\XX\rightarrow\RR\cup\{+\infty\}$, we say that $f$ is $\lambda$--convex if for every constant speed geodesic $x:[0,1]\rightarrow\XX$, it holds
$$
f(x_t)\le (1-t)f(x_0)+t f(x_1)-\frac{\lambda}{2} t (1-t)\dist(x_0,x_1)^2\quad\text{for every }t\in[0,1].
$$
If $\lambda=0$, we just say that $f$ is convex.
In the sequel, we will often use the following fact, without notice: if $f$ is $\lambda$--convex, then $f$ satisfies Assumption \ref{ass2} (\textit{a fortiori}, Assumption \ref{ass1}) for that value of $\lambda$. 

\begin{prop}\label{May98}
Let $(\XX,\dist)$ be an Hadamard space and $f:\XX\rightarrow\RR\cup\{+\infty\}$  be proper, $\lambda$--convex and lower semicontinuous. 
	\begin{enumerate}[label=\roman*)]
\item If $\tau\in\left(0,\frac{1}{2\lambda^-}\right)$, then 
\begin{equation}\label{lipJ}
	J_\tau^f\text{ is single valued and $\frac{1}{\sqrt{1-2\lambda^-\tau}}$-Lipschitz}.
\end{equation}
\item If $0<\nu<\mu<\frac{1}{{2 \lambda^-}}$, then
\begin{equation}\label{liptau}
	\dist(J^f_\nu x,J^f_\mu x)\le \frac{1}{(1+\lambda\mu)\sqrt{1-2\lambda^-\nu}} (\mu-\nu)\abs{\partial f}(x)\quad\text{for every }x\in\XX.
\end{equation}
\end{enumerate}
\end{prop}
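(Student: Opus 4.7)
The backbone for both items is the observation that in the Hadamard setting the squared distance $y\mapsto \dist^2(y,x)$ is $2$-convex along geodesics (this is exactly the $\CAT(0)$ inequality with one vertex fixed), so the functional
\[ g_\tau^x(y)\defeq f(y)+\frac{\dist^2(y,x)}{2\tau} \]
is $(\lambda+1/\tau)$-convex along geodesics, and in particular strongly convex whenever $\tau<1/\lambda^-$, a condition implied by the hypothesis $\tau<1/(2\lambda^-)$.

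For (i), I would first deduce existence and uniqueness of $u=J_\tau^f x$ by the direct method: Proposition \ref{boundefrombelow} yields a quadratic lower bound for $f$ ensuring coercivity of $g_\tau^x$, strong convexity combined with completeness of the Hadamard space forces every minimizing sequence to be Cauchy via the $\CAT(0)$ midpoint inequality, and lower semicontinuity identifies the limit as the unique minimum. For the Lipschitz estimate, setting $u=J_\tau^f x$ and $u'=J_\tau^f x'$, I would write strong convexity of $g_\tau^x$ at the minimum $u$ tested at $u'$, the symmetric inequality with $x,x'$ swapped tested at $u$, and sum the two; the $f$-terms cancel, leaving
\[ (\lambda+1/\tau)\dist^2(u,u')\le \frac{\dist^2(u',x)+\dist^2(u,x')-\dist^2(u,x)-\dist^2(u',x')}{2\tau}. \]
The right-hand side is (up to a factor of $\tau$) the Alexandrov quasilinearization bracket associated with $(u,u',x,x')$, and in $\CAT(0)$ spaces the Berg--Nikolaev/Reshetnyak inequality bounds it above by $\dist(u,u')\dist(x,x')/\tau$; rearranging yields Lipschitz constant $1/(1+\lambda\tau)$, which is dominated by the announced $1/\sqrt{1-2\lambda^-\tau}$.

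For (ii), the guiding idea is to reduce to (i) through a suitable auxiliary point $\bar u$ chosen so that $J_\nu^f\bar u=u_\mu$, where $u_\mu\defeq J_\mu^f x$. In the Hilbert case $\bar u$ is the point at parameter $1-\nu/\mu$ on the segment from $x$ to $u_\mu$, so that $\dist(x,\bar u)=(1-\nu/\mu)\dist(x,u_\mu)\le (\mu-\nu)\abs{\partial f}(x)/(1+\lambda\mu)$ by \eqref{bounds}, and Part (i) directly gives $\dist(J_\nu^f x,u_\mu)\le \dist(x,\bar u)/\sqrt{1-2\lambda^-\nu}$, which is exactly the claim. The main obstacle is that the identity $u_\mu=J_\nu^f\bar u$ may fail in a general Hadamard space; to circumvent it I would instead perform a direct two-sided strong-convexity argument, applying strong convexity of $g_\nu^x$ at $u_\nu$ tested at $u_\mu$ and of $g_\mu^x$ at $u_\mu$ tested at $u_\nu$, summing, and obtaining after cancellation an inequality of the form $(2\lambda\mu\nu+\mu+\nu)\dist^2(u_\nu,u_\mu)\le (\mu-\nu)(\dist^2(u_\mu,x)-\dist^2(u_\nu,x))$; factoring the difference of squared distances via the triangle inequality and invoking \eqref{bounds} to bound $\dist(u_\tau,x)\le \tau\abs{\partial f}(x)/(1+\lambda\tau)$ then produces the stated estimate.
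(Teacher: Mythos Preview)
Your argument is correct for both items, and in fact yields sharper constants than stated: $1/(1+\lambda\tau)$ for the Lipschitz bound in (i), and $1/\big((1+\lambda\mu)(1+\lambda\nu)\big)$ for (ii) once one simplifies
\[
\frac{1}{2\lambda\mu\nu+\mu+\nu}\left(\frac{\mu}{1+\lambda\mu}+\frac{\nu}{1+\lambda\nu}\right)=\frac{1}{(1+\lambda\mu)(1+\lambda\nu)}.
\]
However, your route differs from the paper's, and one of your concerns is misplaced. For (i) the paper simply cites \cite[Lemma~1.12]{Mayer1998}; your direct proof via the Berg--Nikolaev quadrilateral inequality is a nice self-contained alternative. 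For (ii), the resolvent identity $J_\mu^f x = J_\nu^f\big(\gamma(\nu/\mu)\big)$, with $\gamma$ the geodesic from $J_\mu^f x$ to $x$, \emph{does} hold in Hadamard spaces---it is \cite[Lemma~1.10]{Mayer1998}---and the paper obtains \eqref{liptau} precisely by combining this identity with (i) and \eqref{bounds}, exactly the Hilbert-style reduction you outlined before abandoning it. So your ``obstacle'' is not one. That said, your two-sided strong-convexity argument is a legitimate and arguably more elementary alternative: it bypasses the resolvent identity entirely (whose proof in $\CAT(0)$ is not completely trivial) at the cost of a small extra computation. Both approaches reach the goal; the paper's is shorter once one accepts Mayer's lemma, while yours is more self-contained.
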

Concerning the proposition above, we remark that item $i)$ is proved in \cite[
Lemma 1.12]{Mayer1998}, whereas item $ii)$ follows combining $i)$, \eqref{bounds} and the so called resolvent identity (\cite[Lemma 1.10]{Mayer1998}), that reads as follows: if $0<\nu<\mu<\frac{1}{{ \lambda^-}}$ and $x\in\XX$,
\begin{equation}\label{resolvent}
	J^f_\mu x=J^f_\nu \left(\gamma\left(\frac{\nu}{\mu}\right)\right),
\end{equation} 
where $\gamma$ is the (unique) constant speed geodesic $\gamma:[0,1]\rightarrow\XX$ joining $J^f_\mu x$ to $x$.

\begin{prop}[{\cite[Proposition 2.2.17]{Bac14}}]\label{hasinf}
Let $(\XX,\dist)$ be an Hadamard space and $f:\XX\rightarrow\RR\cup\{+\infty\}$ be $\lambda$--convex and lower semicontinuous, with $\lambda>0$. Then $f$ attains its infimum (possibly $+\infty$). In particular, $f$ is bounded from below.
\end{prop}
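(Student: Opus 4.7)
The plan is a two-step argument: first show that $f$ is bounded below, then show that any minimizing sequence is Cauchy, at which point completeness of $\XX$ and lower semicontinuity of $f$ produce the minimizer. If $D(f)=\emptyset$ the infimum is $+\infty$ and is trivially attained, so I fix some $\bar x\in D(f)$ for the remainder of the argument.

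For the first step, I would first establish that $f$ is bounded below on the closed ball $\bar B_1(\bar x)$. Proposition \ref{propweaktopo}(i) provides that bounded sequences in an Hadamard space admit weakly convergent subsequences, and one can combine this with the weak lower semicontinuity of $\lambda$-convex lower semicontinuous functions (which can be obtained from the asymptotic-center characterization of weak convergence together with applying the $\lambda$-convexity inequality along geodesics from the weak limit to points of a given sequence). Closedness of $\bar B_1(\bar x)$ under weak convergence follows from item (iii) of Proposition \ref{propweaktopo} applied with $w=\bar x$, so the direct method yields a minimizer of $f$ on $\bar B_1(\bar x)$ and, in particular, a finite lower bound $m:=\inf_{\bar B_1(\bar x)}f$. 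Feeding this into Proposition \ref{boundefrombelow} gives
$$
f(y)\ \ge\ \frac{\lambda}{2}\dist(y,\bar x)^2\ +\ C\,\dist(y,\bar x)\ +\ m
$$
for a suitable constant $C$, and since $\lambda>0$ the right-hand side is itself bounded below as $\dist(y,\bar x)$ ranges over $[0,+\infty)$; hence $M:=\inf_\XX f>-\infty$.

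With the lower bound $M>-\infty$ in hand, I would take a minimizing sequence $\{x_h\}\subseteq D(f)$ with $f(x_h)\to M$, apply $\lambda$-convexity at $t=1/2$ along the (unique) geodesic joining $x_h$ to $x_k$, and denote the midpoint by $z_{h,k}$. This produces
$$
M\ \le\ f(z_{h,k})\ \le\ \frac{f(x_h)+f(x_k)}{2}\ -\ \frac{\lambda}{8}\dist(x_h,x_k)^2,
$$
whence $\dist(x_h,x_k)^2\le\frac{4}{\lambda}(f(x_h)+f(x_k)-2M)\to 0$. Completeness of $\XX$ then gives $x_h\to x^*\in\XX$ and lower semicontinuity gives $f(x^*)\le\liminf_h f(x_h)=M$, so $x^*$ is a minimizer. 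The main obstacle is the first step: the Cauchy argument degenerates without $M>-\infty$, and justifying the local lower bound in a genuinely metric setting is delicate since closed balls in an Hadamard space need not be compact. The approach above circumvents this through weak compactness and weak lower semicontinuity; an alternative route, avoiding the weak topology, would be to first exhibit some $y_0\in D(f)$ with $\abs{\partial f}(y_0)<+\infty$ and apply Proposition \ref{slopes} directly to obtain $f(y)\ge f(y_0)-\abs{\partial f}(y_0)\dist(y,y_0)+\frac{\lambda}{2}\dist(y,y_0)^2$, but the existence of such a point is itself non-trivial in the purely metric setting.
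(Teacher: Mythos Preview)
The paper does not supply its own proof of this proposition; it is simply quoted from \cite[Proposition 2.2.17]{Bac14}. Your argument is correct and follows a standard route. Two remarks are worth making.

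First, since $\lambda>0$ the function $f$ is in particular convex, and the weak lower semicontinuity you invoke is then a standard fact in Hadamard spaces: it follows most cleanly from the statement that closed convex sets are weakly closed (sublevel sets of a convex lower semicontinuous function being closed and convex). Your parenthetical sketch of an argument via geodesics from the weak limit is a bit vague as written, but the result is certainly available from the reference.

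Second, there is a mild redundancy in the strategy: having already established weak lower semicontinuity, once Proposition~\ref{boundefrombelow} gives the quadratic lower bound you know that every minimizing sequence is bounded, so you could conclude immediately by the direct method (extract a weakly convergent subsequence and apply weak lower semicontinuity) rather than switching tools to the midpoint/Cauchy argument. Both routes are valid; the Cauchy argument has the bonus of giving uniqueness of the minimizer for free.
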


\begin{defn}
Let $(\XX,\dist)$ be an Hadamard space. Let $\{f^h\}_h$ be functions from $\XX$ to $\RR\cup\{+\infty\}$ and let $f:\XX\rightarrow\RR\cup\{+\infty\}$. We say that $f_h$ Mosco converge to $f$ if the following two conditions hold:
\begin{enumerate}[label=\roman*)]
\item {$\Gamma$--$\liminf$ inequality:} if $\{x^h\}_h\subseteq\XX$ is such that $x^h\rightharpoonup x$, then $$ f(x)\le \liminf_h f^h(x^h);$$
\item {$\Gamma$--$\limsup$ inequality:} for every $x\in\XX$, there exists $\{x^h\}_h\subseteq\XX$ with $x^h\rightarrow x$ and $$\limsup_h f^h(x^h)\le f(x).$$
\end{enumerate}
\end{defn}
\begin{prop}\label{prop}
Let $(\XX,\dist)$ be an Hadamard space and $\lambda\in\RR$.
Let $\{f^h\}_h$, $f$ be proper $\lambda$--convex and lower semicontinuous functions from $\XX$ to $\RR\cup\{+\infty\}$ such that $f^h$ Mosco converge to $f$. Then, for any $x\in\XX$ and $\tau\in\left(0,\frac{1}{2 \lambda^-}\right)$, it holds 
\begin{equation}\label{convres}
J^{f^{h}}_\tau x \rightarrow J^f_\tau x.
\end{equation}

\end{prop}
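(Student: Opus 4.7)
The plan is to work with the augmented functionals $g^h := f^h + \dist(\cdot,x)^2/(2\tau)$ and $g := f + \dist(\cdot,x)^2/(2\tau)$. Both are lower semicontinuous and $\alpha$-convex with $\alpha := \lambda + \tau^{-1} > 0$ (since $\tau < 1/(2\lambda^-)$), so Proposition \ref{hasinf} applied to $g$ gives $g$ bounded below, and by construction $y^h := J^{f^h}_\tau x$ and $y := J^f_\tau x$ are the unique minimizers of $g^h$ and $g$ respectively (unique by Proposition \ref{May98}(i)). Moreover, since $w \mapsto \dist(w,x)^2/(2\tau)$ is strongly continuous and weakly lower semicontinuous (Proposition \ref{propweaktopo}(iii)), Mosco convergence transfers from $\{f^h\}$ to $\{g^h\}$. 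Strong $\alpha$-convexity tested at the minimum yields the key inequality
\begin{equation*}
 g^h(w) \ge g^h(y^h) + \frac{\alpha}{2}\dist(w,y^h)^2 \qquad \text{for every } w \in \XX,
\end{equation*}
so fixing a Mosco recovery sequence $z^h \to y$ with $\limsup_h g^h(z^h) \le g(y)$ and evaluating at $w = z^h$ reduces the task of proving $y^h \to y$ to two things: that $\{y^h\}$ is bounded, and that $\liminf_h g^h(y^h) \ge g(y)$.

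The harder half is equi-coercivity. I argue by contradiction: suppose along a subsequence $d_h := \dist(z^h, y^h) \to \infty$, and let $\gamma^h : [0,1] \to \XX$ be the constant-speed geodesic from $z^h$ to $y^h$. Setting $s_h := 1/d_h \searrow 0$, the point $\gamma^h(s_h)$ lies in the closed ball of radius $1$ around $z^h$, so $\{\gamma^h(s_h)\}$ is bounded. The $\alpha$-convexity of $g^h$ along $\gamma^h$, combined with the trivial bound $g^h(y^h) \le g^h(z^h)$ from minimality of $y^h$, gives
\begin{equation*}
 g^h(\gamma^h(s_h)) \le (1-s_h) g^h(z^h) + s_h g^h(y^h) - \tfrac{\alpha}{2}(1-s_h) d_h \le g^h(z^h) - \tfrac{\alpha}{2}(1-s_h) d_h,
\end{equation*}
where I used $s_h(1-s_h) d_h^2 = (1-s_h) d_h$. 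The right-hand side tends to $-\infty$, so $g^h(\gamma^h(s_h)) \to -\infty$; extracting a weakly convergent subsequence $\gamma^h(s_h) \rightharpoonup \eta$ via Proposition \ref{propweaktopo}(i), the Mosco $\Gamma$-liminf forces $g(\eta) = -\infty$, contradicting the lower bound on $g$.

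With $\{y^h\}$ now known to be bounded, weak compactness gives a weakly convergent subsequence $y^{h_k} \rightharpoonup y^\infty$, and the Mosco $\Gamma$-liminf together with $g^{h_k}(y^{h_k}) \le g^{h_k}(z^{h_k})$ yields
\begin{equation*}
g(y^\infty) \le \liminf_k g^{h_k}(y^{h_k}) \le \limsup_k g^{h_k}(z^{h_k}) \le g(y).
\end{equation*}
Uniqueness of the minimizer of $g$ forces $y^\infty = y$ and turns all these inequalities into equalities, so in particular $g^{h_k}(y^{h_k}) \to g(y)$. Re-inserting this into the key inequality at $w = z^{h_k}$ gives $\dist(z^{h_k}, y^{h_k})^2 \le \frac{2}{\alpha}(g^{h_k}(z^{h_k}) - g^{h_k}(y^{h_k})) \to 0$, and since $z^{h_k} \to y$, this yields $y^{h_k} \to y$ strongly. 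As every subsequence of $\{y^h\}$ admits a further subsequence converging to $y$, the entire sequence converges, establishing \eqref{convres}.
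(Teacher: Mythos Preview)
Your proof is correct and shares the paper's high-level skeleton (uniform coercivity $\Rightarrow$ bounded minimizers $\Rightarrow$ weak subsequential limit is $J^f_\tau x$ $\Rightarrow$ upgrade to strong convergence), but the two substeps where real work happens are handled differently.

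For equi-coercivity, the paper splits off an $\epsilon$-piece of the quadratic term so that the remainder $G^h_\epsilon$ is still strictly convex, then invokes Proposition~\ref{boundefrombelow} to obtain a uniform affine lower bound for $G^h_\epsilon$ (the uniformity in $h$ coming from the $\Gamma$--$\liminf$ inequality applied on a fixed ball). You instead argue by contradiction, walking a unit step along the geodesic from $z^h$ toward $y^h$ and using $\alpha$-convexity plus minimality of $y^h$ to force $g^h$ to $-\infty$ on a bounded set. Your route is more self-contained (it does not appeal to Proposition~\ref{boundefrombelow}), while the paper's route yields an explicit quantitative growth estimate~\eqref{boundbelow} that your contradiction argument does not produce.

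For the upgrade from weak to strong convergence, the paper observes that both summands $f^h$ and $\dist(\,\cdot\,,x)^2/(2\tau)$ are separately weakly lower semicontinuous, so convergence of the sum of minima forces $\dist(J^{f^h}_\tau x,x)\to\dist(J^f_\tau x,x)$, and then concludes via the Kadec--Klee property of Hadamard spaces (Proposition~\ref{propweaktopo}~$ii)$). You instead feed the recovery sequence $z^{h_k}$ into the strong-convexity inequality $g^{h_k}(z^{h_k})\ge g^{h_k}(y^{h_k})+\tfrac{\alpha}{2}\dist(z^{h_k},y^{h_k})^2$ and use $g^{h_k}(z^{h_k})-g^{h_k}(y^{h_k})\to 0$. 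This is arguably cleaner: it is quantitative, does not need to separate the two summands, and would survive in settings where a Kadec--Klee-type statement is unavailable but the functionals are uniformly strongly convex.
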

\begin{proof}
Pick $z\in\XX$ with $f(z)<+\infty$ and, thanks to the $\Gamma$--$\limsup$ inequality, take a sequence $\{z^h\}_h\subseteq\XX$ such that $z^h\rightarrow z$ and $ f^h(z^h)\rightarrow f(z)$. In particular, we can assume $ f(z^h)<+\infty$ for every $h$. We write  $$G^h_\epsilon\defeq  f^h(\,\cdot\,)+\left(\frac{1}{2\tau}-\epsilon\right) \dist(\,\cdot\,,x)^2$$ where $\epsilon>0$ is small enough so that $1/\tau-2\epsilon>\lambda^-$, notice that
\begin{equation}\label{summ}
f^h(y)+\frac{1}{2\tau} \dist(y,x)^2=G^h_\epsilon(y)+\epsilon \dist(y,x)^2.
\end{equation}
Now, $G^h_\epsilon$ is $\lambda'$ convex and lower semicontinuous, with $\lambda'>0$, so that, as a consequence of Proposition \ref{boundefrombelow}, setting $m^h\defeq \inf_{\bar{B}_1 (z^h)} G^h_\epsilon$, we have that
$$G^h_\epsilon(y)\ge (m^h-G^h_\epsilon (z^h))\dist(y,z^h)+m^h.$$
Notice that thanks to items $ii)$ and $iii)$ of Proposition \ref{propweaktopo} and the $\Gamma$--$\liminf$ inequality, it holds, with the obvious definition for $G_\epsilon$,
$$\liminf_h\inf_{\bar{B}_1(z^h)} G_\epsilon^h \ge \inf_{\bar{B}_1(z)} G_\epsilon\ge \inf_{\XX} G_\epsilon>-\infty$$
where the last inequality follows from Proposition \ref{hasinf}.
Therefore, recalling \eqref{summ}, we easily obtain that
\begin{equation}\label{boundbelow}
f^h(y)+\frac{1}{2\tau} \dist(y,x)^2\ge C_1+C_2\dist(y,x) +\epsilon \dist(y,x)^2\quad\text{for every }h
\end{equation}
for some constants $C_1,C_2\in\RR$ independent of $h$.
As the $\Gamma$--$\limsup$ inequality easily implies that 
\begin{equation}\label{limsupinf}
\limsup_h\inf_y f^h(y)+\frac{1}{2\tau} \dist(y,x)^2\le  \inf_y f(y)+\frac{1}{2\tau} \dist(y,x)^2, 
\end{equation}
we easily obtain from \eqref{boundbelow} that the sequence $\{J_\tau^{f^h} x\}_h$ is bounded, hence, up to subsequences that we do not relabel, we have $J_\tau^{f^h} x\rightharpoonup w$ (item $i)$ of Proposition \ref{propweaktopo}).
Then, by the $\Gamma$--$\liminf$ inequality and item $iii)$ of Proposition \ref{propweaktopo}, $$\liminf_h  f^h(J_\tau^{f^h} x)+\frac{1}{2\tau} \dist(J_\tau^{f^h} x,x)^2\ge f(w)+\frac{1}{2\tau} \dist(w ,x)^2.$$
Taking into account \eqref{limsupinf} we have that $w=J^f_\tau x$ and 
$$\lim_h  f^h(J_\tau^{f^h} x)+\frac{1}{2\tau} \dist(J_\tau^{f^h} x,x)^2= f(J_\tau^f x)+\frac{1}{2\tau} \dist(J_\tau^f x ,x)^2,$$
in particular, $\dist(J_\tau^{f^h} x,x)\rightarrow \dist(J_\tau^f x,x)$. Strong convergence follows then from item $ii)$ of Proposition \ref{propweaktopo}.
\end{proof}

\begin{thm}\label{main}
Let $(\XX,\dist)$ be an Hadamard space and $\lambda\in\RR$.
Let $\{f^h\}_h$, $f$ be proper $\lambda$--convex and lower semicontinuous functions from $\XX$ to $\RR\cup\{+\infty\}$ such that $f^h$ Mosco converge to $f$. 
Let $\{x_0^h\}_h,\{x_1^h\}_h\subseteq\XX$ be two sequences with $x^h_0\rightarrow x_0$ and $x^h_1\rightarrow x_1$. Assume moreover that 
\begin{equation}\notag
\limsup_h \abs{\partial f^h}(x_0^h)<+\infty \quad\text{and}\quad \limsup_h \abs{\partial f^h}(x_1^h)<+\infty.
\end{equation}
Then $\Theta^{f^h}_{x_0^h,x_1^h}$ $\Gamma$--converge to $\Theta^f_{x_0,x_1}$ with respect to the $\C([0,1],\XX)$ topology.
\end{thm}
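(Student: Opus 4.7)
The plan is to prove the two $\Gamma$-convergence inequalities separately.

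\textbf{The $\Gamma$-$\liminf$ inequality.} Let $\gamma^h \to \gamma$ in $\C([0,1],\XX)$ with $\liminf_h \Theta^{f^h}_{x_0^h,x_1^h}(\gamma^h) < +\infty$, so each $\gamma^h \in \AC$ has the prescribed endpoints and uniform convergence forces $\gamma(i)=x_i$. The kinetic term is lower semicontinuous under uniform convergence of absolutely continuous curves by classical results. For the slope term I first establish the pointwise bound
$$|\partial f|(y) \le \liminf_h |\partial f^h|(y^h) \quad \text{whenever } y^h \to y \text{ strongly.}$$
The proof uses Proposition \ref{slopes}: fix $z \ne y$, pick a Mosco $\Gamma$-$\limsup$ recovery $z^h \to z$ with $f^h(z^h) \to f(z)$, use $\liminf_h f^h(y^h) \ge f(y)$ (Mosco $\Gamma$-$\liminf$), apply Proposition \ref{slopes} to each $f^h$ with test point $z^h$, pass to the liminf using monotonicity of $(\cdot)^-$ under one-sided convergence, then take the supremum over $z$ and reapply Proposition \ref{slopes} to $f$. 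Fatou's lemma applied to $t \mapsto |\partial f^h|^2(\gamma^h(t))$ concludes.

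\textbf{The $\Gamma$-$\limsup$ inequality.} Given $\gamma \in \AC$ with $\Theta^f_{x_0,x_1}(\gamma) < +\infty$, I build the recovery sequence in two pieces, following the introduction. Choose $\tau_h \searrow 0$ with $\tau_h < 1/(2\lambda^-)$ and set the bulk $\tilde\gamma_h(t) := J^{f^h}_{\tau_h}\gamma(t)$. By Proposition \ref{May98}(i), $\tilde\gamma_h$ is absolutely continuous with $|\dot{\tilde\gamma_h}|(t) \le (1-2\lambda^-\tau_h)^{-1/2}|\dot\gamma|(t)$, yielding a uniform-in-$h$ $L^2$-modulus of continuity. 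Combined with Proposition \ref{prop} applied pointwise in $t$ and a diagonal argument on $\tau_h$, this gives $\tilde\gamma_h \to \gamma$ uniformly. Since $\tilde\gamma_h(i) = J^{f^h}_{\tau_h} x_i \ne x_i^h$ in general, on short intervals $[0,\delta_h]$ and $[1-\delta_h,1]$ I concatenate the variational interpolation $s \mapsto J^{f^h}_s x_i^h$ for $s \in [0,\tau_h]$ (connecting $x_i^h$ to $J^{f^h}_{\tau_h} x_i^h$, with velocity and slope both controlled by $|\partial f^h|(x_i^h)$ via Proposition \ref{May98}(ii) and \eqref{bounds}) with a geodesic from $J^{f^h}_{\tau_h} x_i^h$ to $\tilde\gamma_h(i) = J^{f^h}_{\tau_h}x_i$ of length at most $(1-2\lambda^-\tau_h)^{-1/2}\dist(x_i^h,x_i)$ by Proposition \ref{May98}(i). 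The equiboundedness hypothesis $\limsup_h |\partial f^h|(x_i^h) < +\infty$ enters precisely here. Choosing $\tau_h,\delta_h \to 0$ with $\tau_h^2/\delta_h \to 0$, $\dist(x_i^h,x_i)^2/\delta_h \to 0$ and $\delta_h \limsup_h |\partial f^h|^2(x_i^h) \to 0$ makes the two correction pieces contribute $o(1)$ to the total action.

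\textbf{Main obstacle.} The principal technical difficulty is the $\limsup$ control of the bulk slope integral,
$$\limsup_h \int_0^1 |\partial f^h|^2(\tilde\gamma_h(t))\, dt \le \int_0^1 |\partial f|^2(\gamma(t))\, dt.$$
In Hilbert space this is immediate from the identity $|\partial f|(J_\tau x) = \dist(J_\tau x, x)/\tau$ and the monotone convergence $|\nabla f_\tau|(x) \nearrow |\partial f|(x)$ of the Yosida gradient. In the Hadamard setting only the one-sided inequality $|\partial f^h|(\tilde\gamma_h(t)) \le \dist(\tilde\gamma_h(t),\gamma(t))/\tau_h \le (1+\lambda\tau_h)^{-1}|\partial f^h|(\gamma(t))$ from \eqref{bounds} is available, and the right-hand side is not a priori pointwise bounded. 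I expect to close the estimate by testing the minimality of the resolvent against a pointwise Mosco recovery sequence $z^h(t) \to \gamma(t)$ with $f^h(z^h(t)) \to f(\gamma(t))$, and exploiting the $\lambda$-convexity of $f^h$ along the unique geodesic joining $\tilde\gamma_h(t)$ to $\gamma(t)$ in the Hadamard space --- this is the point at which the CAT$(0)$ structure enters essentially.
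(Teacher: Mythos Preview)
Your $\Gamma$--$\liminf$ argument is correct and is exactly what the paper means when it calls this step ``standard''. Your endpoint-correction scheme is also essentially the paper's, with one caveat below. The real gap is in the piece you yourself flag as the ``main obstacle'': you have not closed the bulk slope estimate, and the route you sketch (testing minimality of the resolvent against a Mosco recovery sequence for $\gamma(t)$) is not what works.

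The missing idea is this. Do \emph{not} send $\tau_h\searrow 0$ together with $h$; instead fix $\tau>0$ small and first pass to the limit in $h$. From \eqref{bounds} you have
\[
|\partial f^h|\bigl(J^h_\tau\gamma(t)\bigr)\le \frac{\dist\bigl(\gamma(t),J^h_\tau\gamma(t)\bigr)}{\tau},
\]
and the right-hand side, while not controlled by $|\partial f^h|(\gamma(t))$, is nevertheless \emph{uniformly bounded in $h$ for fixed $\tau$}: triangulate through $x_0^h$ and $J^h_\tau x_0^h$ and use the Lipschitz bound \eqref{lipJ} together with the hypothesis $\sup_h|\partial f^h|(x_0^h)<\infty$. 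This domination, combined with the pointwise resolvent convergence $J^h_\tau\gamma(t)\to J_\tau\gamma(t)$ of Proposition~\ref{prop}, lets you apply dominated (not Fatou) convergence to obtain
\[
\limsup_h\int_0^1\frac{\dist(\gamma(t),J^h_\tau\gamma(t))^2}{\tau^2}\dd t
=\int_0^1\frac{\dist(\gamma(t),J_\tau\gamma(t))^2}{\tau^2}\dd t
\le \frac{1}{(1+\lambda\tau)^2}\int_0^1|\partial f|^2(\gamma(t))\dd t,
\]
where the last inequality is \eqref{bounds} applied to $f$, \emph{not} to $f^h$. Only after this do you send $\tau\searrow 0$ by a diagonal argument. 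The two-step order (first $h\to\infty$ at fixed $\tau$, then $\tau\searrow 0$) is the point you are missing; your attempt to couple $\tau_h$ to $h$ from the outset is exactly what prevents you from invoking Proposition~\ref{prop}.

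A smaller issue: in your endpoint correction you use a \emph{raw} geodesic from $J^{h}_{\tau}x_i^h$ to $J^{h}_{\tau}x_i$. Its length is indeed controlled by \eqref{lipJ}, but the slope $|\partial f^h|$ along an arbitrary geodesic is not; only the kinetic part of the action would be $o(1)$. The paper fixes this by first taking the geodesic $\tilde\psi^h_i$ from $x_i^h$ to $x_i$ and then pushing it through the resolvent, $\psi^h_i\defeq J^h_\tau\tilde\psi^h_i$, so that \eqref{bounds} gives a slope bound of order $C/\tau$ along the whole curve.
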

\begin{proof}
For the sake of simplicity, we write $\Theta^h$ instead of $\Theta^{f^h}_{x_0^h,x_1^h}$ and $\Theta$ instead of $\Theta^f_{x_0,x_1}$. Similar notation will be used for the resolvents $J_\tau^{f^h}$ and $J_\tau^f$.
In the sequel we let $C$ denote a constant; it may vary during the proof (but will be independent of $\tau$). 
\\\textsc{Step 1: $\Gamma$--$\liminf$ inequality}. The proof of this step is standard cf.\ \cite[Proof of Theorem 1.1]{ABBGamma} and is due to a simple lower semicontinuity argument.
\\\textsc{Step 2: $\Gamma$--$\limsup$ inequality}. 
We can assume that 
$$ \abs{\partial f^h}(x_0^h)+\abs{\partial f^h}(x_1^h)\le C\quad\text{for every }h.$$

Fix for the moment $\tau$ such that $0<\tau<\min\left\{\frac{1}{4 \lambda^-},\frac{1}{2}\right\}$ and keep in mind that the estimates in what follows will be independent of $\tau$. For the sake of simplicity, in the first part of this proof we will often to omit to write explicitly the dependence on $\tau$.
Notice also that $$ \frac{1}{1+\lambda\tau}+\frac{1}{\sqrt{1-2\lambda^-\tau}}\le C\quad\text{for every }\tau\in\left(0,\frac{1}{4\lambda^-}\right).$$

Fix $\gamma\in \C([0,1],\XX)$ such that $\Theta(\gamma)<+\infty$ and define $\gamma_\tau^h\defeq J_\tau^h\gamma$.
We compute, recalling \eqref{lipJ} and \eqref{bounds},
\begin{equation}\notag
	\begin{split}
		\int_0^1 |{\dot{\gamma^h}}|^2(t)+\abs{\partial f^h}^2(\gamma^h(t))\dd{t}\le \int_0^1 \frac{1}{1-2\lambda^-\tau} |{\dot{\gamma}}|^2(t)+\frac{\dist(\gamma(t),J^h_\tau\gamma(t))^2}{\tau^2}\dd{t}.
	\end{split}
\end{equation}
Thanks to the rough estimate, that follows from \eqref{lipJ} and \eqref{bounds},
\begin{equation}\notag
\dist(\gamma(t),J^h_\tau\gamma(t))\le \dist(\gamma(t),x_0^h)+\dist(x_0^h,J^h_\tau x_0^h)+\dist(J^h_\tau x_0^h,J^h_\tau\gamma(t))\le C
\end{equation}
we can use Proposition \ref{prop} together with dominated convergence to see that
\begin{equation}\notag
\limsup_h 	\int_0^1 |{\dot{\gamma^h}}|^2(t)+\abs{\partial f^h}^2(\gamma^h(t))\dd{t}\le \int_0^1 \frac{1}{1-2\lambda^-\tau} |{\dot{\gamma}}|^2(t)+\frac{\dist(\gamma(t),J_\tau\gamma(t))^2}{\tau^2}\dd{t}
\end{equation}
so that using \eqref{bounds},
\begin{equation}\label{bound1}
\limsup_h 	\int_0^1 |{\dot{\gamma^h}}|^2(t)+\abs{\partial f^h}^2(\gamma^h(t))\dd{t}\le (1+C\tau) \Theta(\gamma).
\end{equation}

Let $\tilde{\psi}^h_0:[0,\dist(x_0^h,x_0)]\rightarrow\XX$ be the (unique) constant speed geodesic joining $x_0^h$ to $x_0$ and define similarly $\tilde{\psi}^h_1$, but with the opposite ordering for $x_1^h$ to $x_1$. 
Let ${\psi}^h_0\defeq J_\tau^h \tilde{\psi}^h_0$ and define similarly ${\psi}^h_1$.  
Notice that using \eqref{bounds}, the fact that $\tilde{\psi}^h_0$ is $1$-Lipschitz and \eqref{bounds} again, we have that 
\begin{equation}\notag
\begin{split}
	\abs{\partial f^h}(\psi^h_0(t))&\le\frac{\dist(J_\tau^h{\tilde{\psi}}^h_0(t),\tilde{\psi}^h_0(t))}{\tau}\\&\le \frac{\dist(J_\tau^h{\tilde{\psi}}^h_0(t),J_\tau^h\tilde{\psi}^h_0(0))}{\tau}+\frac{\dist(J_\tau^h{\tilde{\psi}}^h_0(0),\tilde{\psi}^h_0(0))}{\tau}+\frac{\dist({\tilde{\psi}}^h_0(t),\tilde{\psi}^h_0(0))}{\tau}\\&\le
	\frac{C\dist(x_0^h,x_0)}{\tau}+C\abs{\partial f^h}(x_0^h)\le \frac{C}{\tau}.
\end{split}
\end{equation}
Therefore, recalling \eqref{lipJ}, we have that
\begin{equation}\label{bound3}
	\int_0^{\dist(x_0^h,x_0)} |{\dot{\psi_0^h}}|^2(t)+\abs{\partial f^h}^2(\psi_0^h(t))\dd{t}\le \frac{C}{\tau^2}\dist(x_0^h,x_0)
\end{equation}
and we also have a similar bound for $\psi_1^h$.

We consider the curves $\phi_0^h,\phi_1^h:[0,\tau]\rightarrow\XX$ $$\phi_0^h(s)\defeq J^h_s(x^h_0)\quad\text{and}\quad\phi_1^h(s)\defeq J^h_{1-s}(x^h_1).$$
Using \eqref{bounds} and the crucial  estimate \eqref{liptau} in item $ii)$ of Proposition \ref{May98}, we have that
\begin{equation}\label{bound2}
	\int_0^\tau |{\dot{\phi_0^h}}|^2(t)+\abs{\partial f^h}^2(\phi_0^h(t))\dd{t}\le C\tau
\end{equation}
and we also have a similar bound for $\phi_1^h$.

We let now $\tilde{\Phi}^h:[-\tau-\dist(x_0,x_0^h),1+\dist(x_1,x_1^h)+\tau]\rightarrow\XX$ be the curve obtained by concatenating $\phi_0^h$, $\psi_0^h$, $\gamma^h$, $\psi_1^h$, $\phi_1^h$. We define also $\Phi^h:[0,1]\rightarrow\XX$ be the curve obtained from $\tilde{\Phi}$ rescaling linearly the time, notice that $\Phi^h$ joins $x_0^h$ to $x_1^h$.

Notice that from the bounds \eqref{bound1}, \eqref{bound3}, \eqref{bound2} (and similar bounds for $\psi_1^h$ and $\phi_1^h$) it follows that 
\begin{equation}\label{thetalimsup}
\limsup_h \Theta^h(\Phi^h_\tau)\le (1+C\tau)^{C}\Theta(\gamma)+C\tau,
\end{equation}
where we made explicit the dependence on $\tau$ of the curves $\Phi^h_\tau$, that we omitted to write above for simplicity of notation.
We claim that 
\begin{equation}\label{goodcurveconv}
\lim_{\tau\searrow 0}\limsup_{h} \dist_\infty(\Phi_\tau^h,\gamma)=0,
\end{equation}
where $\dist_\infty$ is the distance inducing the $\C([0,1],\XX)$ topology. Assuming the claim to be true, using a diagonal argument together with \eqref{thetalimsup} and \eqref{goodcurveconv}  we can conclude the proof. Indeed, for every $0<\epsilon\ll 1$, take $0<\tau_\epsilon<\epsilon$ and $H_\epsilon$ such that if $h>H_\epsilon$,
\begin{equation}\label{tmp1}
\Theta^h(\Phi_{\tau_\epsilon}^h)\le (1+C\tau_\epsilon)^C\Theta(\gamma)+C\tau_\epsilon
\end{equation}
and 
\begin{equation}\label{tmp2}
\dist_\infty (\Phi_{\tau_\epsilon}^h,\gamma)<\epsilon.
\end{equation}
Taking $\epsilon_n\defeq n^{-1}$ we have a sequence $\{H_n=H_{\epsilon_n}\}_n$ that we can assume to be strictly increasing such that if $h>H_n$ then \eqref{tmp1} and \eqref{tmp2} hold with $\tau_{\epsilon_n}$ in place of $\tau_\epsilon$ and $\epsilon_n$ in place of $\epsilon$. We then take $\Psi^h\defeq \Phi^h_{\tau_n}$ if $H_n< h\le H_{n+1}$ and this is a suitable recovery sequence.

We show now \eqref{goodcurveconv}. Fix $\epsilon>0$. 
First notice that \eqref{thetalimsup} grants that, for every $\tau$, up to discarding finitely many curves (the number of the curves to be discarded may depend on $\tau$),    $\{\Phi_\tau^h\}_{h}$ are equi-Hölder continuous (where the Hölder constant can be assumed to be independent of $\tau$). Then, if $\tau$ is small enough,
\begin{equation}\label{notilde}
\limsup_h\dist(\Phi_\tau^h(t),\tilde{\Phi}_\tau^h(t))<\epsilon\quad\text{for every }t\in[0,1].
\end{equation}

Notice that the bound $\Theta(\gamma)<+\infty$ ensures that $\abs{\partial f}(\gamma(t))<+\infty$ a.e.\ then \eqref{bounds} shows that $J_\tau \gamma(t)\rightarrow\gamma(t)$ a.e. Now, recalling  \eqref{lipJ} and the bound $\Theta(\gamma)<+\infty$ again, we see that the curves $\{J_\tau\gamma\}_{\tau\le\tau_0}$ are equi-Hölder continuous.
Therefore, using an Arzelà–Ascoli argument we see that $$ \dist_\infty(J_\tau\gamma,\gamma)\rightarrow 0\quad\text{as }\tau\searrow 0.$$ Then, if $\tau$ is small enough, we have that 
\begin{equation}\label{uniformgamma}
\dist_\infty(\gamma,J_\tau\gamma)<\epsilon.
\end{equation}
Let $\tau$ be small enough so that \eqref{notilde} and \eqref{uniformgamma} hold.
Proposition \ref{prop} implies that $$\dist(\gamma(t),J^h_\tau\gamma(t))\rightarrow \dist(\gamma(t),J_\tau\gamma(t))<\epsilon \quad\text{for every $t$ as }h\rightarrow\infty.$$ Now, again by \eqref{lipJ} we see that $\{J^h_\tau\gamma(t)\}_h$ are equi-Hölder continuous so that also the maps $t\mapsto \dist(\gamma(t),J^h_\tau\gamma(t))$ are equi-Hölder continuous and therefore an Arzelà–Ascoli argument implies that the convergence is uniform, so that,
\begin{equation}\label{tmpfin}
\limsup_h\dist_\infty(\gamma,J_\tau^h\gamma)<2\epsilon.
\end{equation}
Taking into account that $\tilde{\Phi}^h_\tau(t)=\gamma^h_\tau(t)=J_\tau^h\gamma(t)$ for $t\in[0,1]$, we see that  \eqref{notilde} and \eqref{tmpfin} allow us to conclude the proof of the claim thanks to the subadditivity of the $\limsup$.
\end{proof}
\section{Gradient Flows Case}\label{sect2}
Recall that if $(\XX,\dist)$ is a metric space, $f:\XX\rightarrow\RR\cup\{+\infty\}$ and $x\in\XX$, an $\EVI_\lambda$ gradient flow trajectory for $f$ starting at $x$ is a map $G^f_{\cdot}x:[0,+\infty)\rightarrow D(f)$ such that the curve $x_t\defeq G^f_{t}x$ is locally absolutely continuous in $(0,+\infty)$ with\begin{equation}\notag
	\lim_{t\searrow 0} x_t=x_0=x
\end{equation} 
and satisfies, for every $v\in D(f)$,
\begin{equation}\notag
\frac{1}{2}\dv{t}\dist(x_t,v)^2+\frac{\lambda}{2}\dist(x_t,v)^2\le  f(v)-f(x_t)\quad\text{for a.e.\ }t\in (0,+\infty).
\end{equation}
We remark (\cite[Theorem 4.0.4]{AmbrosioGigliSavare08}) that if $(\XX,\dist)$ is complete and $f:\XX\rightarrow\RR\cup\{+\infty\}$ is proper, lower semicontinuous, bounded from below on some ball intersecting $D(f)$ and satisfies Assumption \ref{ass2} for some $\lambda\in\RR$, then, given any $x\in\overline{D(f)}$, there exists an $\EVI_\lambda$ gradient flow trajectory for $f$ starting at $x$.

In the following proposition we recall a few properties of $\EVI_\lambda$ gradient flows for future reference. We refer to \cite{AmbrosioGigliSavare08,MurSav}.
\begin{prop}
Let $(\XX,\dist)$ be a metric space, $\lambda\in\RR$, $f:\XX\rightarrow\RR\cup\{+\infty\}$ be a proper and lower semicontinuous function and $x_0,x_1\in\XX$. Assume that there exist two $\EVI_\lambda$ gradient flows trajectories $G^f_{\,\cdot\,}x_0,G^f_{\,\cdot\,}x_1$ starting respectively from $x_0,x_1$,
Then 
	\begin{enumerate}[label=\roman*)]
		\item It holds
\begin{equation}\label{gbound1}
\dist(G_t^f x_0,G_t^f x_1)\le e^{-\lambda t}\dist(x_0,x_1)\quad\text{for every $t\in [0,+\infty)$}.
\end{equation}
\item It holds
 \begin{equation}\label{gbound3}
|(G_t^f x_0)'|^2(t)=\abs{\partial f}^2(G_t^f x_0)=-\dv{t}(f\circ G_t^f x_0)\quad\text{for a.e.\ }t\in(0,+\infty).
\end{equation}
\item It holds
\begin{equation}\label{gbound2}
\frac{e^{\lambda t}-1}{\lambda t}\abs{\partial f} (G^f_t x_0)\le \frac{\dist(G^f_t x_0,x_0)}{t}\le e^{\lambda^- t}\abs{\partial f}(x_0)\quad \text{for every }t\in (0,+\infty),
\end{equation}
where $\frac{e^{\lambda t}-1}{\lambda t}$ has to be understood to be $1$ if $\lambda=0$.
	\end{enumerate}
\end{prop}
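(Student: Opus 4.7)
The plan is to extract all three statements from the $\EVI_\lambda$ inequality by choosing the test point $v$ appropriately and invoking standard Gronwall-type manipulations, treating the items in the order (i), (iii), (ii) since the energy identity is the most delicate one and benefits from the other two.

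For item (i), I would use the standard ``doubling of variables'' trick. Fix two time parameters $\bar{s}, \bar{t} > 0$. Apply the EVI inequality for the trajectory $G_{\,\cdot\,}^f x_0$ with the fixed test point $v = G_{\bar{t}}^f x_1$, and symmetrically apply the EVI inequality for $G_{\,\cdot\,}^f x_1$ with the fixed test point $v = G_{\bar{s}}^f x_0$. Because each inequality is valid only a.e., it is cleanest to work with their integrated forms on rectangles in $(s,t)$ and then invoke Fubini, which allows one to sum the two bounds rigorously along the diagonal $\bar{s} = \bar{t}$. The $f$--terms cancel, leaving
$$\frac{d}{dt}\dist(G_t^f x_0, G_t^f x_1)^2 + 2\lambda\,\dist(G_t^f x_0, G_t^f x_1)^2 \le 0 \quad\text{for a.e.\ }t>0,$$
and Gronwall's lemma yields \eqref{gbound1}.

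For the upper half of item (iii) I would apply EVI with the fixed test point $v = x_0$: using Proposition \ref{slopes} (a standard consequence of Assumption \ref{ass1}) one bounds
$$f(x_0) - f(G_t^f x_0) \le |\partial f|(x_0)\,\dist(G_t^f x_0, x_0) - \tfrac{\lambda}{2}\dist(G_t^f x_0, x_0)^2,$$
which plugged into EVI turns into a differential inequality for $t\mapsto\dist(G_t^f x_0, x_0)$ whose integration gives the a priori bound $\dist(G_t^f x_0, x_0) \le t\, e^{\lambda^- t}|\partial f|(x_0)$. The a posteriori bound (the first inequality in \eqref{gbound2}) follows symmetrically, exploiting the contraction estimate \eqref{gbound1} to propagate the slope information along the flow.

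For item (ii), which I expect to be the main obstacle, the goal is to upgrade the one-sided EVI inequalities to an equality. The strategy is to test EVI at time $t$ with $v = G_{t+h}^f x_0$, and symmetrically at time $t+h$ with $v = G_t^f x_0$, then divide by $h$ and let $h \to 0^+$. The contraction estimate \eqref{gbound1} (together with the bound from item (iii)) shows that $t \mapsto G_t^f x_0$ is locally Lipschitz on $(0,+\infty)$, which controls the remainder terms. The two inequalities, in the limit, pinch both the metric derivative $|(G_\cdot^f x_0)'|$ and the slope $|\partial f|(G_t^f x_0)$ between the same quantities, yielding their equality with $-\tfrac{d}{dt}(f\circ G_t^f x_0)$; the absolute continuity of $t\mapsto f\circ G_t^f x_0$ on $(0,+\infty)$ falls out of the same argument. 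The hard part is precisely in this limiting manipulation, since EVI is only a.e.\ valid and one must simultaneously identify three a priori distinct quantities: managing the error terms carefully with the Lipschitz bound just recalled is the key technical step.
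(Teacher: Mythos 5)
First, a point of reference: the paper does not prove this proposition at all. It is explicitly recalled from \cite{AmbrosioGigliSavare08,MurSav}, and the only argument supplied is the remark following the statement, namely that the second inequality in \eqref{gbound2} is obtained by integrating $|\dot x_s|=\abs{\partial f}(x_s)$ from \eqref{gbound3} and using that $s\mapsto e^{\lambda s}\abs{\partial f}(x_s)$ is nonincreasing and right continuous (so $\dist(G^f_tx_0,x_0)\le\abs{\partial f}(x_0)\int_0^te^{-\lambda s}\dd{s}\le te^{\lambda^-t}\abs{\partial f}(x_0)$), while the first inequality is \cite[Eq.\ (3.13)]{MurSav} evaluated at $v=u_t$. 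So you are reconstructing proofs that the paper delegates to the literature. Your doubling-of-variables argument for item (i) is the standard and correct one, and your outline for item (ii) points in the right direction, although the actual identification of the three quantities (the chain $-\dv{t}(f\circ x_t)\le|\dot x_t|\,\abs{\partial f}(x_t)\le\tfrac12|\dot x_t|^2+\tfrac12\abs{\partial f}^2(x_t)\le-\dv{t}(f\circ x_t)$, whose last link is the energy dissipation inequality extracted from the EVI and whose first link requires the slope to be an upper gradient along the flow) is the substantive content of \cite[Theorem 4.0.4]{AmbrosioGigliSavare08} and is only gestured at in your sketch. Your use of Proposition \ref{slopes} for the second inequality of \eqref{gbound2} is also not licensed by the stated hypotheses, which assume only the existence of the two $\EVI_\lambda$ trajectories and not Assumption \ref{ass1}; this is repairable (existence of $\EVI_\lambda$ flows forces the relevant convexity, cf.\ \cite{DaneriSavare08}), but the route via \eqref{gbound3} avoids the issue entirely.

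The genuine gap is your treatment of the \emph{first} inequality in \eqref{gbound2}. ``Propagating the slope along the flow via the contraction'' yields $\abs{\partial f}(G^f_tx_0)\le e^{-\lambda t}\abs{\partial f}(x_0)$, i.e.\ a bound by the \emph{initial} slope, and this cannot be converted into a bound by $\dist(G^f_tx_0,x_0)/t$: take $\XX=\RR$, $\lambda=0$, $f(x)=M(\delta-x)^+$ and $x_0=0$; then for $t>\delta/M$ the trajectory has travelled only $\delta$, so $\dist(G^f_tx_0,x_0)/t=\delta/t$ is small while $e^{-\lambda t}\abs{\partial f}(x_0)=M$ is large. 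The first inequality of \eqref{gbound2} is a genuine \emph{regularization} estimate: it controls the slope at the evolved point by the distance travelled, and it is obtained from an integrated form of the EVI that carries an extra quadratic slope term on the left-hand side (this is what \cite[Eq.\ (3.13)]{MurSav} provides; substituting $v=u_t$ annihilates the distance and $f$ terms and leaves precisely $\frac{e^{\lambda t}-1}{\lambda}\abs{\partial f}(G^f_tx_0)\le\dist(G^f_tx_0,x_0)$). As written, your argument for this item would not go through, and since the finiteness of $\abs{\partial f}(G^f_tx_0)$ for $t>0$ is also what underpins the local Lipschitz bound you invoke in item (ii), the gap propagates there as well.
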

We remark that the second inequality of \eqref{gbound2} follows integrating the first equality in \eqref{gbound3}, after taking the square root of both sides, and taking into account that $t\mapsto e^{\lambda t}\abs{\partial f}(G^f_t x_0)$ is nonincreasing and right continuous and the first is precisely \cite[Equation (3.13)]{MurSav} with $u_t$ in place of $v$.

\begin{defn}
	Let $(\XX,\dist)$ be a metric space. Let $\{f^h\}_h$ be functions from $\XX$ to $\RR\cup\{+\infty\}$ and let $f:\XX\rightarrow\RR\cup\{+\infty\}$. We say that $f_h$ $\Gamma$--converge to $f$ if the following two conditions hold:
	\begin{enumerate}[label=\roman*)]
		\item {$\Gamma$--$\liminf$ inequality:} if $\{x^h\}_h\subseteq\XX$ is such that $x^h\rightarrow x$, then $$ f(x)\le \liminf_h f^h(x^h);$$
		\item {$\Gamma$--$\limsup$ inequality:} for every $x\in\XX$, there exists $\{x^h\}_h\subseteq\XX$ with $x^h\rightarrow x$ and $$\limsup_h f^h(x^h)\le f(x).$$
	\end{enumerate}
\end{defn}

\begin{prop}\label{uniformbound}
	Let $(\XX,\dist)$ be a proper metric space and let $\{f^h\}_h$, $f^h:\XX\rightarrow\RR\cup\{+\infty\}$, such that $f^h$ $\Gamma$--converges to $f:\XX\rightarrow\RR\cup\{+\infty\}$, with $f$ proper lower semicontinuous. Assume moreover that there exists $\lambda\in\RR$ such that $f^h$ and $f$ satisfy Assumption \ref{ass1} for that value of $\lambda$. Then, there exist ${C}_1,{C}_2\in\RR$  and $\bar{x}\in\XX$ (independent of $h$) such that 
	\begin{equation}\notag
		f^h(x)+ C_1\dist(x,\bar{x})^2\ge C_2\quad\text{for every }x\in\XX\text{ and }h\ge H_0
	\end{equation}
	for some $H_0\in\NN$ and the same inequality holds also with $f$ in place of $f^h$.
\end{prop}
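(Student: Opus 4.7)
The plan is to reduce everything to a single application of Proposition \ref{boundefrombelow}, using the $\Gamma$-convergence together with the properness of $\XX$ to obtain uniform control of the ingredients of that proposition (the reference point, its value, and the infimum on a unit ball around it).

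First, I would fix $\bar{x}\in D(f)$ (which exists since $f$ is proper) and use the $\Gamma$-$\limsup$ inequality to produce a recovery sequence $\bar{x}^h\to\bar{x}$ with $\limsup_h f^h(\bar{x}^h)\le f(\bar{x})$. In particular, $f^h(\bar{x}^h)$ is eventually bounded from above by some $M\in\RR$, and $\dist(\bar{x}^h,\bar{x})\le 1$ for $h\ge H_0$, so that $\bar{B}_1(\bar{x}^h)\subseteq \bar{B}_2(\bar{x})$.

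The crucial step is to show that there exists $m\in\RR$ with $\inf_{\bar{B}_2(\bar{x})} f^h\ge m$ for all $h\ge H_0$; this, together with the inclusion of balls above, will give $m^h\defeq \inf_{\bar{B}_1(\bar{x}^h)} f^h\ge m$ uniformly. I would argue by contradiction: if no such $m$ exists, one extracts a subsequence $h_k$ and points $y^{h_k}\in \bar{B}_2(\bar{x})$ with $f^{h_k}(y^{h_k})\to -\infty$; by properness of $\XX$ the ball $\bar{B}_2(\bar{x})$ is compact so, up to a further subsequence, $y^{h_k}\to y\in\bar{B}_2(\bar{x})$. Completing $\{y^{h_k}\}$ to a full sequence $\{y^h\}$ converging to $y$ by inserting a recovery sequence for $y$ at the remaining indices, the $\Gamma$-$\liminf$ inequality yields $f(y)\le \liminf_h f^h(y^h)\le \liminf_k f^{h_k}(y^{h_k})=-\infty$, contradicting that $f$ is proper and $\RR\cup\{+\infty\}$-valued. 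This is the step I expect to require the most care, because of the interplay between subsequences and the definition of $\Gamma$-convergence.

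With $M$, $c\defeq \sup_h\dist(\bar{x}^h,\bar{x})<\infty$ and $m$ in hand, Proposition \ref{boundefrombelow} applied to $f^h$ at the point $\bar{x}^h$ gives, for $h\ge H_0$,
\begin{equation}\notag
f^h(y)\ge \frac{\lambda}{2}\dist(y,\bar{x}^h)^2+K\dist(y,\bar{x}^h)+m,
\end{equation}
with $K\defeq m-M-\lambda^+/2$ independent of $h$. Using $\dist(y,\bar{x}^h)\le \dist(y,\bar{x})+c$ in the (possibly negative) quadratic and linear parts, together with the elementary inequality $at\le \tfrac{1}{2}t^2+\tfrac{1}{2}a^2$ to absorb first-order terms, one obtains constants $C_1,C_2\in\RR$ (depending only on $\lambda,K,c,m$) with $f^h(x)+C_1\dist(x,\bar{x})^2\ge C_2$ for every $x\in\XX$ and $h\ge H_0$. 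Finally, the analogous estimate for $f$ follows from a direct application of Proposition \ref{boundefrombelow} at the same $\bar{x}$, once we observe that lower semicontinuity of $f$ together with compactness of $\bar{B}_1(\bar{x})$ (by properness of $\XX$) ensures $\inf_{\bar{B}_1(\bar{x})}f>-\infty$; possibly enlarging $C_1$ and decreasing $C_2$ yields a single pair of constants working for both $f$ and all $f^h$ with $h\ge H_0$.
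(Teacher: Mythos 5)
Your proof is correct and follows essentially the same route as the paper: fix $\bar{x}\in D(f)$, use properness of $\XX$ and the $\Gamma$--convergence to control the data of Proposition \ref{boundefrombelow} uniformly in $h$, and then apply that proposition. The paper's own proof is a three-line sketch that leaves the uniformity-in-$h$ step implicit; your compactness/$\Gamma$--$\liminf$ contradiction argument giving the uniform lower bound on $\inf_{\bar{B}_1(\bar{x}^h)}f^h$ is exactly the detail being glossed over, and you supply it correctly.
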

\begin{proof}
	Take $\bar{x}\in\XX$ such that $f(\bar{x})<+\infty$. Notice that, being the space proper and $f$ lower semicontinuous, $\inf_{\bar{B}_1(\bar{x})} f>-\infty$. Then we can conclude exploiting Proposition \ref{boundefrombelow}.
\end{proof}

Combining \cite[Theorem 2.17]{DaneriSavare08}, Proposition \ref{uniformbound} and the existence results for $\EVI_\lambda$ gradient flows, we obtain the following.
\begin{thm}
	Let $(\XX,\dist)$ be a proper metric space and let $\{f^h\}_h$, $f^h:\XX\rightarrow\RR\cup\{+\infty\}$ be lower semicontinuous, such that $f^h$ $\Gamma$--converges to $f:\XX\rightarrow\RR\cup\{+\infty\}$, with $f$ proper and lower semicontinuous. Assume moreover that there exists $\lambda\in\RR$ such that $f^h$ and $f$ satisfy Assumption \ref{ass2} for that value of $\lambda$ and that $\overline{D(f^h)}=\XX$ for every $h$.
	Then, if we have $\{x^h\}_h\subseteq\XX$ such that $x^h\rightarrow x$, there exists the $\EVI_\lambda$ gradient flow trajectory for $f$ starting at $x$ and it satisfies
	\begin{equation}\notag
\lim_{h} G^{f^h}_{\,\cdot\,} x^h\rightarrow G^f_{\,\cdot\,}x\quad\text{locally uniformly on }(0,+\infty).
	\end{equation}
\end{thm}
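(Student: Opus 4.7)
The plan is to verify the hypotheses of \cite[Theorem 2.17]{DaneriSavare08} and invoke it directly; the statement itself signals this approach. The three items to check are a uniform quadratic lower bound on the $f^h$, existence of the approximating $\EVI_\lambda$ trajectories, and convergence of the initial data (the last of which is assumed).

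First I would apply Proposition \ref{uniformbound} to obtain $\bar{x}\in\XX$, constants $C_1,C_2\in\RR$, and $H_0\in\NN$ such that
$$
f^h(y)+C_1\dist(y,\bar{x})^2\ge C_2\quad\text{for every }y\in\XX\text{ and }h\ge H_0,
$$
together with the analogous bound for $f$. In particular each $f^h$ (for $h\ge H_0$) and $f$ are bounded from below on every bounded subset of $\XX$, and hence on any ball intersecting their respective effective domains.

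Combining this lower bound with Assumption \ref{ass2}, properness, lower semicontinuity, and the density hypothesis $\overline{D(f^h)}=\XX$ (which ensures $x^h\in\overline{D(f^h)}$), the existence theorem \cite[Theorem 4.0.4]{AmbrosioGigliSavare08} recalled at the start of this section produces the $\EVI_\lambda$ trajectory $G^{f^h}_{\,\cdot\,} x^h$ for every $h\ge H_0$. At this point I would feed everything into \cite[Theorem 2.17]{DaneriSavare08}, whose hypotheses are precisely $\Gamma$--convergence $f^h\to f$, a uniform coercivity of the form supplied by Proposition \ref{uniformbound}, and convergence of initial data $x^h\to x$. That theorem simultaneously yields the $\EVI_\lambda$ trajectory $G^f_{\,\cdot\,}x$ (so in particular $x\in\overline{D(f)}$) and the desired locally uniform convergence $G^{f^h}_{\,\cdot\,}x^h\to G^f_{\,\cdot\,}x$ on $(0,+\infty)$.

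The main obstacle is essentially bookkeeping: one has to check that the precise notion of $\Gamma$--convergence of functionals and the form of the coercivity bound used in \cite[Theorem 2.17]{DaneriSavare08} match the ones at our disposal. Since Proposition \ref{uniformbound} was tailored exactly for this application, and our $\Gamma$--convergence is the standard one with respect to the strong topology of a proper metric space, no genuinely new argument is needed beyond the material already assembled in Propositions \ref{boundefrombelow} and \ref{uniformbound}.
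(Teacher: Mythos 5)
Your proposal follows exactly the paper's own (one-sentence) argument: the theorem is stated as a direct consequence of combining \cite[Theorem 2.17]{DaneriSavare08}, Proposition \ref{uniformbound}, and the existence result \cite[Theorem 4.0.4]{AmbrosioGigliSavare08} for $\EVI_\lambda$ gradient flows. You have simply spelled out the bookkeeping more explicitly, so the approach and conclusion match.
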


With the discussion above in mind, we can adapt the proof of Theorem \ref{main}, replacing the resolvent map $J_\tau$ with the $\EVI_\lambda$ gradient flow $G_\tau$ to show the following.
\begin{thm}\label{main1}
	Let $(\XX,\dist)$ be a proper geodesic metric space and let $\{f^h\}_h$, $f^h:\XX\rightarrow\RR\cup\{+\infty\}$ be lower semicontinuous, such that $f^h$ $\Gamma$--converges to $f:\XX\rightarrow\RR\cup\{+\infty\}$, with $f$ proper and lower semicontinuous. Assume moreover that there exists $\lambda\in\RR$ such that $f^h$ and $f$ satisfy Assumption \ref{ass2} for that value of $\lambda$ and that $\overline{D(f^h)}=\XX$ for every $h$.
	Let $\{x_0^h\}_h,\{x_1^h\}_h\subseteq\XX$ be two sequences with $x^h_0\rightarrow x_0$ and $x^h_1\rightarrow x_1$. Assume moreover that 
	\begin{equation}\notag
		\limsup_h \abs{\partial f^h}(x_0^h)<+\infty \quad\text{and}\quad \limsup_h \abs{\partial f^h}(x_1^h)<+\infty.
	\end{equation}
	Then $\Theta^{f^h}_{x_0^h,x_1^h}$ $\Gamma$--converge to $\Theta^f_{x_0,x_1}$ with respect to the $\C([0,1],\XX)$ topology.
\end{thm}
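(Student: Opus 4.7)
The plan is to mirror the proof of Theorem \ref{main}, replacing the resolvent $J^{f^h}_\tau$ by the $\EVI_\lambda$ gradient flow map $G^{f^h}_\tau$ and using the estimates \eqref{gbound1}--\eqref{gbound3} in place of \eqref{lipJ}, \eqref{bounds} and \eqref{liptau}, together with the convergence theorem for $\EVI_\lambda$ gradient flows stated just above in place of Proposition \ref{prop}. The $\Gamma$--$\liminf$ inequality is again the standard lower semicontinuity argument: for any $\gamma^h\to\gamma$ uniformly, Proposition \ref{slopes} combined with the strong $\Gamma$--$\liminf$ hypothesis on $\{f^h\}_h$ (applied to recovery sequences $y^h\to y$ for each test point $y$) yields the pointwise bound $\abs{\partial f}(\gamma(t))\le\liminf_h\abs{\partial f^h}(\gamma^h(t))$, and Fatou's lemma upgrades this to the integral inequality for the action.

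For the $\Gamma$--$\limsup$ inequality, fix $\gamma\in\AC([0,1],\XX)$ with $\Theta^f_{x_0,x_1}(\gamma)<+\infty$ and, without loss of generality, $\abs{\partial f^h}(x_i^h)\le C$ for $i=0,1$. For small $\tau>0$ set $\gamma_\tau^h(t)\defeq G^{f^h}_\tau\gamma(t)$; by \eqref{gbound1}, $|\dot{\gamma_\tau^h}|(t)\le e^{\lambda^-\tau}|\dot\gamma|(t)$, and by \eqref{gbound2}, $\abs{\partial f^h}(\gamma_\tau^h(t))\le c(\lambda,\tau)\,\dist(\gamma_\tau^h(t),\gamma(t))/\tau$ with $c(\lambda,\tau)\to 1$ as $\tau\searrow 0$, the distances being bounded uniformly in $h,t$ via the slope bound at $x_0^h$, the estimate $\dist(G^{f^h}_\tau x_0^h,x_0^h)\le C\tau$ from \eqref{gbound2}, and a triangle inequality along $\gamma_\tau^h$. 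Using the pointwise convergence $G^{f^h}_\tau\gamma(t)\to G^f_\tau\gamma(t)$ supplied by the preceding gradient-flow convergence theorem, dominated convergence, and \eqref{gbound2} applied to $f$ itself to bound $\dist(G^f_\tau\gamma(t),\gamma(t))/\tau\le e^{\lambda^-\tau}\abs{\partial f}(\gamma(t))$, one obtains
$$
\limsup_h\int_0^1|\dot{\gamma_\tau^h}|^2(t)+\abs{\partial f^h}^2(\gamma_\tau^h(t))\,\diff t\le(1+\omega(\tau))\,\Theta^f_{x_0,x_1}(\gamma),
$$
with $\omega(\tau)\to 0$ as $\tau\searrow 0$. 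The endpoint corrections are built exactly as in Theorem \ref{main}: at the left endpoint, a constant-speed geodesic $\tilde\psi_0^h$ from $x_0^h$ to $x_0$ (provided by the geodesic hypothesis on $\XX$) is pushed forward to $\psi_0^h\defeq G^{f^h}_\tau\tilde\psi_0^h$, with action $O(\dist(x_0^h,x_0)/\tau^2)\to 0$ as $h\to\infty$; the gap between $x_0^h$ and $G^{f^h}_\tau x_0^h$ is closed by the trajectory $\phi_0^h(s)\defeq G^{f^h}_s x_0^h$ on $[0,\tau]$, whose action is $O(\tau)$ thanks to the gradient-flow identity \eqref{gbound3} combined with the slope decay $\abs{\partial f^h}(G^{f^h}_s x_0^h)\le e^{\lambda^-s}\abs{\partial f^h}(x_0^h)\le C$ from \eqref{gbound2}. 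Analogous corrections $\psi_1^h,\phi_1^h$ are placed at the right endpoint.

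Concatenating $\phi_0^h\cdot\psi_0^h\cdot\gamma_\tau^h\cdot\psi_1^h\cdot\phi_1^h$ and reparametrizing affinely to $[0,1]$ produces a competitor $\Phi_\tau^h$ with $\Phi_\tau^h(0)=x_0^h$, $\Phi_\tau^h(1)=x_1^h$ and $\limsup_h\Theta^{f^h}_{x_0^h,x_1^h}(\Phi_\tau^h)\le(1+\omega(\tau))\,\Theta^f_{x_0,x_1}(\gamma)+\omega(\tau)$. Uniform convergence $\Phi_\tau^h\to\gamma$, as first $h\to\infty$ and then $\tau\searrow 0$, is an Arzelà--Ascoli consequence of the equi-Lipschitzness of $\{G^{f^h}_\tau\}_h$ provided by \eqref{gbound1}, the pointwise convergence $G^{f^h}_\tau\gamma(t)\to G^f_\tau\gamma(t)\to\gamma(t)$, and the vanishing rescaled length of the correction pieces. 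A diagonal extraction as at the end of Theorem \ref{main} then yields the recovery sequence. The main technical point compared with the $\CAT(0)$ case is that the resolvent identity \eqref{resolvent} and its consequence \eqref{liptau}, which were used in Theorem \ref{main} to bound the bridging curves $\phi_i^h$, are replaced here by the natural gradient-flow identities \eqref{gbound3} and \eqref{gbound2}, which directly furnish the $O(\tau)$ action estimate without invoking any geodesic-interpolation argument; this flexibility is precisely what allows the approach to cover the non-$\CAT(0)$ setting.
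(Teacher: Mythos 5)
Your proposal is correct and follows exactly the route the paper takes: the paper's own ``proof'' of Theorem \ref{main1} consists precisely of the instruction to rerun the proof of Theorem \ref{main} with $J_\tau$ replaced by $G_\tau$, with \eqref{gbound1}--\eqref{gbound2} standing in for \eqref{lipJ}, \eqref{bounds}, \eqref{liptau} and the gradient-flow convergence theorem standing in for Proposition \ref{prop}, and your write-up carries out that substitution faithfully (including the $O(\tau)$ bound for the bridging arcs via \eqref{gbound3} and the slope decay along the flow, and the diagonal extraction at the end).
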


\begin{thm}\label{main2}
	Let $(\XX,\dist)$ be a geodesic metric space and let  $f:\XX\rightarrow\RR\cup\{+\infty\}$ be a lower semicontinuous function 
	such that, for some $\lambda\in\RR$, for every $x\in\XX$, there exists an $\EVI_\lambda$ gradient flow trajectory for $f$ starting from $x$. Let $\{\epsilon_h\}_h\subseteq\RR$ with $\epsilon_h\searrow 0$. Let moreover $\{x_0^h\}_h\subseteq\XX$, $\{x_1^h\}_h\subseteq\XX$  be two sequences with $x_0^h\rightarrow x_0$, $x_1^h\rightarrow x_1$ and $$\limsup_h f(x^h_0)\le f(x_0)<\infty\quad\text{and}\quad\limsup_h f(x^h_1)\le f(x_1)<\infty.$$
	Then $\Theta^{\epsilon_h f}_{x_0,x_1}$ $\Gamma$--converge to $\Theta^0_{x_0,x_1}$ with respect to the $\C([0,1],\XX)$ topology.
\end{thm}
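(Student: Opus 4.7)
I read the hypothesis on the sequences $x_0^h,x_1^h$ as indicating (as in Theorem \ref{main1}) that the intended statement is $\Theta^{\epsilon_h f}_{x_0^h,x_1^h}\to\Theta^0_{x_0,x_1}$; the literal fixed-endpoint version is the special case $x_0^h\equiv x_0$. The $\Gamma$--$\liminf$ inequality is immediate from $\epsilon_h^2|\partial f|^2\ge 0$ and the lower semicontinuity of $\gamma\mapsto\int_0^1|\dot\gamma|^2\,dt$ under uniform convergence. The main work is the $\Gamma$--$\limsup$, which I plan to prove following the skeleton of Theorem \ref{main1} with $G^{\epsilon_h f}_\tau=G^f_{\epsilon_h\tau}$ playing the role of the approximation, the key point being that the unbounded-slope hypotheses will be bypassed by the first inequality of \eqref{gbound2}.

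Fix $\gamma\in\AC([0,1],\XX)$ joining $x_0,x_1$ with $\int_0^1|\dot\gamma|^2\,dt<\infty$. First I pick $\delta_h\searrow 0$ so slowly that $\dist(x_0^h,x_0)^2/\delta_h\to 0$ and similarly at $x_1$, and form an auxiliary curve $\tilde\gamma^h\in\AC([0,1],\XX)$ joining $x_0^h$ to $x_1^h$ by prepending/appending constant-speed geodesics of duration $\delta_h$ and reparametrizing $\gamma$ in between; then $\tilde\gamma^h\to\gamma$ uniformly and $\int|\dot{\tilde\gamma^h}|^2dt\to\int|\dot\gamma|^2dt$. Next I define $\Phi^h:[0,1+2\epsilon_h]\to\XX$ as the concatenation of three pieces: the $\EVI_\lambda$-flow arc $t\mapsto G^f_t x_0^h$ on $[0,\epsilon_h]$, the smoothed main piece $t\mapsto G^f_{\epsilon_h}\tilde\gamma^h(t-\epsilon_h)$ on $[\epsilon_h,1+\epsilon_h]$, and the time-reversed flow $t\mapsto G^f_{1+2\epsilon_h-t}x_1^h$ on $[1+\epsilon_h,1+2\epsilon_h]$. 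By construction the endpoints of consecutive pieces agree at $G^f_{\epsilon_h}x_0^h$ and $G^f_{\epsilon_h}x_1^h$, so no bridging geodesics are needed (unlike in the proof of Theorem \ref{main}). Finally set $\gamma^h(t):=\Phi^h((1+2\epsilon_h)t)$, a curve from $x_0^h$ to $x_1^h$ on $[0,1]$.

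For the action, on the two flow arcs one has $|\dot\Phi^h|=|\partial f|(G^f_\cdot x_0^h)$ and $\int_0^{\epsilon_h}|\dot\Phi^h|^2\,dt=f(x_0^h)-f(G^f_{\epsilon_h}x_0^h)$ by \eqref{gbound3}; the assumption $\limsup_h f(x_0^h)\le f(x_0)$ combined with lower semicontinuity of $f$, joint continuity of $(t,y)\mapsto G^f_t y$ (yielding $G^f_{\epsilon_h}x_0^h\to x_0$) and monotonicity of $f\circ G^f$ force $f(x_0^h)\to f(x_0)$ and $f(G^f_{\epsilon_h}x_0^h)\to f(x_0)$, so both boundary contributions vanish after the rescaling by $1+2\epsilon_h$. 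On the main piece, $|\dot{(G^f_{\epsilon_h}\tilde\gamma^h)}|^2\le e^{-2\lambda\epsilon_h}|\dot{\tilde\gamma^h}|^2$ delivers the correct kinetic term. The decisive slope estimate is the first inequality of \eqref{gbound2} read at $t=\epsilon_h$, which for small $\epsilon_h$ reads
\begin{equation*}
\epsilon_h^2|\partial f|^2(G^f_{\epsilon_h}y)\le C\,\dist(G^f_{\epsilon_h}y,y)^2
\end{equation*}
for some $C=C(\lambda)$. Setting $K:=\overline{\gamma([0,1])\cup\bigcup_h\tilde\gamma^h([0,1])}$, which is compact by the uniform convergence $\tilde\gamma^h\to\gamma$, and exploiting the joint continuity of $(t,y)\mapsto\dist(G^f_t y,y)$ together with its vanishing on $\{0\}\times K$, a standard compactness argument gives $\sup_{y\in K}\dist(G^f_{\epsilon_h}y,y)\to 0$; hence $\epsilon_h^2\int_0^1|\partial f|^2(G^f_{\epsilon_h}\tilde\gamma^h(t))\,dt\to 0$. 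Uniform convergence $\gamma^h\to\gamma$ follows piecewise from the same uniform estimates, concluding $\limsup_h\Theta^{\epsilon_h f}(\gamma^h)\le\int_0^1|\dot\gamma|^2\,dt=\Theta^0(\gamma)$.

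The principal obstacle is precisely the uniform estimate $\sup_{y\in K}\dist(G^f_{\epsilon_h}y,y)\to 0$ on a compact set of initial data: this is what replaces the slope-equiboundedness hypothesis used in Theorem \ref{main1} and makes the scheme go through with no control on $|\partial f|(x_0^h)$ and $|\partial f|(x_1^h)$. Once this is combined with the short-time slope bound $\epsilon_h|\partial f|(G^f_{\epsilon_h}y)\le C\dist(G^f_{\epsilon_h}y,y)$, the $\epsilon_h^2|\partial f|^2$ penalty is absorbed essentially for free and the remainder of the estimate mirrors Theorem \ref{main1}.
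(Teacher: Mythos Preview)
Your proof is correct, but it takes a somewhat different route from the paper's.

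The paper's argument is a two-step reduction. First it flows the endpoints: setting $x_0^h(t)=G^f_t x_0^h$ and using \eqref{gbound3} together with $\limsup_h f(x_0^h)\le f(x_0)$ and lower semicontinuity, it shows $\int_0^{\epsilon_h}\bigl(|\dot{x_0^h}|^2+|\partial f|^2\bigr)\,ds\to 0$; from this it extracts times $t_0^h\in(0,\epsilon_h)$ with $\limsup_h|\partial(\epsilon_h f)|(x_0^h(t_0^h))\le C$, and similarly at $x_1$. At this point the intermediate endpoints $\tilde x_i^h:=x_i^h(t_i^h)$ satisfy the equibounded-slope hypothesis, so the paper simply invokes the proof of Theorem~\ref{main}/\ref{main1} to get a recovery sequence for $\Theta^{\epsilon_h f}_{\tilde x_0^h,\tilde x_1^h}\to\Theta^0_{x_0,x_1}$, and then glues the short flow arcs at the ends.

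Your construction is a single direct build: geodesically bridge $x_i^h$ to $x_i$ inside $\tilde\gamma^h$, then flow the whole curve by $G^f_{\epsilon_h}$, and attach the two flow arcs. The essential novelty relative to the paper is how you kill the slope term on the main piece: instead of reducing to the bounded-slope situation of Theorem~\ref{main1}, you use the regularization estimate (first inequality of \eqref{gbound2}) in the form $\epsilon_h|\partial f|(G^f_{\epsilon_h}y)\le C\,\dist(G^f_{\epsilon_h}y,y)$ and then a uniform-on-compacta argument on $K=\overline{\gamma([0,1])\cup\bigcup_h\tilde\gamma^h([0,1])}$ (which is indeed compact, since $\tilde\gamma^h\to\gamma$ uniformly and each image is compact). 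Your treatment of the two flow arcs, via $f(x_0^h)-f(G^f_{\epsilon_h}x_0^h)\to 0$, is exactly the same mechanism the paper uses.

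What each approach buys: the paper's route is modular---once the flow produces good intermediate endpoints, everything is delegated to the already-proved Theorem~\ref{main1}. Your route is self-contained and avoids rerunning that machinery; it also makes transparent why no slope bound on $x_i^h$ is needed, since the compactness argument supplies uniform smallness of $\epsilon_h|\partial f|\circ G^f_{\epsilon_h}$ directly. Both are valid; yours is arguably the cleaner stand-alone proof of this particular statement. Your observation about the intended subscripts $\Theta^{\epsilon_h f}_{x_0^h,x_1^h}$ versus the literal $\Theta^{\epsilon_h f}_{x_0,x_1}$ is also well taken.
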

\begin{proof}
Of course, we only need to prove the $\Gamma$--$\limsup$ inequality. In the sequel we let $C$ denote a constant; it may vary during the proof.

We build a recovery sequence as follows.
Set $x^h_0(t)\defeq G^{f}_t x^h_0$. 
Now, if $\{t^h\}_h\subseteq\XX$ is such that $t^h\searrow 0$, then, using \eqref{gbound1},
\begin{equation}\label{converge}
\limsup_h \dist(x_0^h(t^h),x_0)\le \limsup_h \dist(G^f_{t^h} x_0^h,G^f_{t^h} x_0)+\limsup_h \dist(G^f_{t^h} x_0,x_0)=0.
\end{equation}
Taking into account the lower semicontinuity of $f$ together with \eqref{converge}, the convergence in the assumptions and \eqref{gbound3}, we have that
$$
\limsup_h \int_0^{\epsilon_h} |{\dot{x_0^h}}|^2(s)+\abs{\partial f}^2(x_0^h(s))\dd{s}=2\limsup_h\left( f(x^h_0)-2 f(x_0^h(\epsilon_h))\right)=0.$$
This implies that we can take $\{t_0^h\}_h\subseteq(0,+\infty)$ with $0<t_0^h<\epsilon_h$ and $$\limsup_h\abs{\partial\epsilon_h f}(x^h_0(t_0^h))\le C.$$
Clearly
$$\limsup_h \int_0^{t_0^h} |{\dot{x^h_0}}|^2(s)+\abs{\partial \epsilon_h f}^2(x^h_0(s))\dd{s}=0.$$
Set $\tilde{x}_0^h\defeq x^h_0(t_0^h)$. We argue in the same way for $x_1$ to obtain $\{x_1^h(\,\cdot\,)\}_h$ and $\{t_1^h\}_h$ and define $\tilde{x}_1^h\defeq x_1^h(t_1^h) $ accordingly. Clearly, $\tilde{x}_0^h\rightarrow x_0$ and $\tilde{x}_1^h\rightarrow x_1$.

Then, arguing as in the proof of Theorem \ref{main} (c.f.\ Theorem \ref{main1}), we obtain a recovery sequence for the $\Gamma$-- convergence of $\Theta^{\epsilon_h f}_{\tilde{x}_0^h,\tilde{x}_1^h}$ to $\Theta^0_{x_0,x_1}$.
It is then enough to join this recovery sequence with the curves $[0,t_0^h]\ni t\mapsto x_0^h (t)$ and  $[0,t_1^h]\ni t\mapsto x_0^h (t_1^h-t)$ and rescale linearly the time (again, see the proof of Theorem \ref{main})
\end{proof}
\begin{rem}
Theorem \ref{main1} and Theorem \ref{main2} need the space to be geodesic only to find geodesics joining $x_0^h$ to $x_0$ and $x_1^h$ to $x_1$. It is easily seen that this geodesics can be replaced by rectifiable curves whose length goes to $0$ as $h\rightarrow\infty$. 
In particular, if the endpoints are kept fixed, i.e.\ $x_0^h=x_0$ and $x_1^h=x_1$ for every $h$, the geodesic assumption  is unnecessary.
\end{rem}
\section{Examples}
We show, with a couple of trivial examples, that our assumptions are rather sharp.
\begin{example}
Let $(\XX,\dist)\defeq ([0,\infty),\dist_e)$ and \begin{equation}
f\defeq\begin{cases}
	\dfrac{1}{x^2}\quad&\text{if }x>0,\\
+\infty \quad&\text{if }x=0.
\end{cases}
\end{equation} 
Let also $\{\epsilon_h\}_h\subseteq  \RR$ with $\epsilon_h\searrow 0$. Choose $x_0^h\defeq \sqrt{\epsilon_h}$ and $x_1^h\defeq 1$ for every $h$, then $x_0=0$ and $x_1=1$.

It is clear that $\epsilon_h f$ $\Gamma$--converge to $0$. All the functions in consideration are convex, lower semicontinuous and admit $\EVI_0$ gradient flow trajectories. Also $$\limsup_h \abs{\epsilon_h f(x_0^h)}<+\infty\quad\text{and}\quad\limsup_h \abs{\epsilon_h f(x_1^h)}<+\infty,$$
but 
$$\limsup_h \abs{\partial \epsilon_h f(x_0^h)} =+\infty.$$
Notice that considering $\gamma\in\AC([0,1],\XX)$ defined as $\gamma(t)\defeq t$, we have that $\Theta_{x_0,x_1}^0(\gamma)=1$, but for any curve $\varphi \in\AC([0,1],\XX)$, we have that $\Theta_{x^h_0,x^h_1}^{\epsilon_h f}(\varphi)\ge 2^{-1}+(1-2\sqrt{\epsilon_h})^2$. This can be seen considering the portion of $\Theta_{x^h_0,x^h_1}^{\epsilon_h f}$ due to a part of $\varphi$ that joins $\sqrt{\epsilon_h}$ to $2\sqrt{\epsilon_h}$ and then a part of $\varphi$ that joins $2\sqrt{\epsilon_h}$ to $1$.
 Thus $\Theta_{x^h_0,x^h_1}^{f^h}$ does not $\Gamma$--converge to $\Theta_{x_0,x_1}^f$.
\end{example}

\begin{example}
Let $(\XX,\dist)\defeq ([0,\infty),\dist_e)$. Define
\begin{equation}
f^h(x)\defeq
\begin{cases}
1-h x\quad&\text{if }0\le x\le h^{-1},\\
0 \quad&\text{if }x>h^{-1};
\end{cases}
\end{equation}
$f\defeq 0.$
Choose $x_0^h\defeq 0$, $x_1^h\defeq 1$ for every $h$, then $x_0=0$ and $x_1=1$.

It is clear that $f^h$ $\Gamma$--converge to $f$. All the functions in consideration are convex, lower semicontinuous and admit $\EVI_0$ gradient flow trajectories. Also $$ \abs{f^h(x)}<1\quad\text{for every }x\in\XX.$$
but 
$$\limsup_h \abs{\partial f^h(x_0^h)} =+\infty.$$
Notice that considering $\gamma\in\AC([0,1],\XX)$ defined as $\gamma(t)\defeq t$, we have that $\Theta_{x_0,x_1}^f(\gamma)=1$, but for any curve $\varphi \in\AC([0,1],\XX)$, we have that $\Theta_{x^h_0,x^h_1}^{f^h}(\varphi)\ge 2$. This can be seen considering the portion of $\Theta_{x^h_0,x^h_1}^{ f^h}$ due to a part of $\varphi$ that joins $0$ to $h^{-1}$. Thus $\Theta_{x^h_0,x^h_1}^{f^h}$ does not $\Gamma$--converge to $\Theta_{x_0,x_1}^f$.
\end{example}

\bibliographystyle{alpha}
\bibliography{Biblio11}
\end{document}